%
%
%
%
\documentclass[proc]{edpsmath}
%

\usepackage{bbm}
\usepackage{float}
\usepackage{graphicx}                  
\usepackage{amssymb}
\usepackage{amsfonts}
\RequirePackage{amsmath}
\RequirePackage{amsthm}
\usepackage{mathrsfs} 
\usepackage{graphicx} 
\usepackage{color,subfigure} 
\usepackage{enumerate}  
\usepackage{ulem}

\newtheorem{theorem}{Theorem}
\newtheorem{lemma}{Lemma}

\newtheorem{remark}[theorem]{Remark}


\newcommand{\R}{\mathbb{R}}
\newcommand{\N}{\mathbb{N}}
\newcommand{\T}{\mathbb{T}}

\newcommand{\dt}{{\Delta t}}
\newcommand{\cM}{\mathcal{M}}
\newcommand{\cL}{\mathcal{L}}

\newcommand{\rme}{\mathrm{e}}
\newcommand{\Id}{\mathrm{Id}}
\newcommand{\ind}{\mathbf{1}}
\renewcommand{\leq}{\leqslant}
\renewcommand{\geq}{\geqslant}
\newcommand{\dps}{\displaystyle}
\newcommand{\eps}{\varepsilon}
\newcommand{\sig}{\sigma}
\newcommand{\ka}{\kappa}
\newcommand{\De}{\Delta}

\begin{document}


\title{Error analysis of the transport properties of Metropolized schemes}
\thanks{Funding from NEEDS ``Milieux poreux'' is gratefully acknowledged. We also benefited from the scientific environment of the Laboratoire International Associ\'e between the Centre National de la Recherche Scientifique and the University of Illinois at Urbana-Champaign.}

\thanks{We thank Marie Jardat and Vincent Dahirel for fruitful discussions on the practical aspects of the numerical methods we study here.}

\author{Max Fathi}\address{LPMA, 4 place Jussieu, 75005 Paris, France}
\author{Ahmed-Amine Homman}\address{CEA, DAM, DIF, F-91297 Arpajon, France}
\author{Gabriel Stoltz}\address{Universit\'e Paris-Est, CERMICS (ENPC), INRIA, F-77455 Marne-la-Vall\'ee, France}

\begin{abstract} We consider in this work the numerical computation of transport coefficients for Brownian dynamics. We investigate the discretization error arising when simulating the dynamics with the Smart MC algorithm (also known as Metropolis-adjusted Langevin algorithm). We prove that the error is of order one in the time step as $\dt$ goes to zero, when using either the Green-Kubo or the Einstein formula to estimate the transport coefficients. We illustrate our results with numerical simulations. \end{abstract}

\begin{resume} Nous nous int\'eressons dans cet article au calcul num\'erique des coefficients de transports pour des dynamiques browniennes. Nous \'etudions l'erreur de discr\'etisation qui apparait lorsqu'on simule la dynamique avec l'algorithme connu sous le nom de ``Smart MC'' dans la litt\'erature. Nous prouvons que cette erreur est d'ordre un en le pas de temps lorsque $\dt$ tend vers z\'ero, lorsqu'on utilise la formule de Green-Kubo ou la formule d'Einstein pour estimer les coefficients de transport. Nous illustrons ces r\'esultats avec des simulations num\'eriques. \end{resume}

\maketitle

Molecular simulation is nowadays a very common tool to quantitatively predict macroscopic properties of matter starting from a microscopic description. These macroscopic properties can be either static properties (such as the average pressure or energy in a system at fixed temperature and density), or transport properties (such as thermal conductivity or shear viscosity). Molecular simulation can be seen as the computational version of statistical physics, and is therefore often used by practitioners of the field as a black box to extract the desired macroscopic properties from some model of interparticle interactions. Most of the work in the physics and chemistry fields therefore focuses on improving the microscopic description, most notably developing force fields of increasing complexity. In comparison, less attention has been paid to the estimation of errors in the quantities actually computed by numerical simulation. Usually, due to the very high dimensionality of the systems under consideration, macroscopic properties are computed as ergodic averages over a very long trajectory of the system, evolved under some appropriate dynamics. There are two main types of errors in this approach: (i) statistical errors arising from incomplete sampling, and (ii) systematic errors (bias) arising from the fact that continuous dynamics are numerically integrated using a finite time-step~$\dt > 0$. 

The aim of this work is to understand the bias arising from the use of finite time steps in the computation of transport coefficients. We consider the case of the self-diffusion, for a certain type of dynamics called Brownian dynamics in the chemistry literature, discretized using the so-called ``Smart MC'' algorithm~\cite{RDF78,Jardat} (this algorithm was also rediscovered later on in the computational statistics literature~\cite{RR96}). The previous works on the numerical analysis of this dynamics established (i)~strong error estimates over finite times~\cite{BV09}, and, as a consequence, errors on finite time correlations~\cite{BE12}; (ii)~exponential convergence rates towards the invariant measure, uniformly in the timestep~\cite{BH13} (which holds up to a small error term in $\dt$ for systems in infinite volume).

\medskip

This proceedings is organized as follows. We start by describing in Section~\ref{sec:model} the Brownian dynamics and its discretization, and define the self-diffusion. We then provide in Section~\ref{sec:a_priori} a priori error estimates for the numerical estimation of the self-diffusion, through two different routes. Numerical simulations illustrate our error bounds in Section~\ref{sec:numerics}. We conclude in Section~\ref{sec:tracks} with some tracks to reduce the numerical error by appropriately modifying the numerical scheme. The proofs of our results are gathered in Section~\ref{sec:proofs}.

\section{Description of the model}
\label{sec:model}

\subsection{Brownian dynamics}

Consider $N$ particles with positions $q=(q_1,\dots,q_N)$ in a cubic box of size $L>0$: $q \in \mathcal{M} = (L\mathbb{T})^{dN}$, $\mathbb{T} = \mathbb{R} / \mathbb{Z}$ being the standard one-dimensional torus and $d$ being the physical dimension (usually $d=3$). The positions of the particles evolve according to the following dynamics:
\begin{equation}
\label{eq:dynamics}
dq_t = -\beta \nabla V(q_t) \, dt + \sqrt{2} \, dW_t,
\end{equation}
where $\beta = 1/(k_{\rm B}T)$ is the inverse temperature ($k_{\rm B}$ being Boltzmann's constant and $T$ being the temperature) and $W_t$ is a standard $dN$-dimensional Brownian motion. The function $V:\mathcal{M}\to \mathbb{R}$ is the potential energy, assumed to be smooth for the mathematical analysis. However, the numerical results presented in Section~\ref{sec:num_real_case} correspond to a potential energy function with singularities.

Standard results (see for instance the references in~\cite[Section~2.2]{LRS10}) show that~\eqref{eq:dynamics} admits the Boltzmann-Gibbs measure 
\begin{equation}
\label{eq:Gibbs}
\mu(dq) = Z^{-1} \,\rme^{-\beta V(q)} \, dq, \qquad Z = \int_\mathcal{M} \rme^{-\beta V},
\end{equation}
as its unique invariant probability measure (note that $Z$ is finite since the position space~$\cM$ is compact and~$V$ is smooth hence bounded). In fact, \eqref{eq:dynamics} is ergodic with respect to this measure, where ergodicity is understood both as (i) the long-time (almost-sure) convergence of averages along trajectories
\[
\lim_{t\to+\infty} \frac1t \int_0^t f(q_s) \, ds = \int_\cM f(q) \, \mu(dq) \qquad \mathrm{a.s.}
\]
for any initial condition~$q_0 \in \cM$ and all observables $f \in L^1(\mu)$; or as (ii) the convergence of the law $\psi(t,q) \, dq$ of the process~\eqref{eq:dynamics}, happening here at an exponential rate, for instance in total variation: Denoting with some abuse of notation the measure $\psi(t,q) \, dq$ by $\psi(t)$, there exist $C,\lambda > 0$ such that
\[
\| \psi(t) - \mu \|_{\rm TV} \leq C \, \rme^{-\lambda t},
\]
where the total variation distance between two measures $\nu_1,\nu_2$ is defined as
\[
\| \nu_1 - \nu_2 \|_{\rm TV} = 2 \sup_{S \in \mathscr{B}(\cM)} \left| \nu_1(S) - \nu_2(S)\right| = \sup_{|\varphi| \leq 1} \left| \int_\cM \varphi \, d\nu_1 - \int_\cM \varphi \, d\nu_2 \right|,
\]
the suprema being taken over all measurable sets of~$\cM$ for the first one, and over all bounded, measurable functions for the second one.

For further purposes, we introduce the generator of~\eqref{eq:dynamics}, namely the operator
\begin{equation}
\label{eq:generator}
\cL = -\beta \nabla V \cdot  \nabla + \Delta.
\end{equation}
This operator (defined with domain $D(\cL) = H^2(\mu)$) is self-adjoint on the Hilbert space $L^2(\mu)$ endowed with the scalar product 
\[
\left \langle \varphi, \psi \right\rangle_{L^2(\mu)} = \int_\cM \varphi \, \psi \, d\mu.
\] 
The operator $-\cL$ moreover has a positive spectral gap (see for instance~\cite[Section~2]{LRS10} and references therein). Indeed, a simple computation shows that
\begin{equation}
  \label{eq:quadratic_form}
  -\left \langle \cL \varphi, \varphi \right\rangle_{L^2(\mu)} = \frac1\beta \| \nabla \varphi \|^2_{L^2(\mu)}.
\end{equation}
The Poincar\'e inequality $\| \varphi \|_{L^2(\mu)} \leq C_{\cM,V} \| \nabla \varphi \|_{L^2(\mu)}$, valid for any function belonging to
\[
\widetilde{L}^2(\mu) = \left\{ \varphi \in L^2(\mu) \, \left| \int_\cM \varphi \, d\mu = 0 \right.\right\},
\] 
allows to conclude that 
\begin{equation}
\label{eq:spectral_gap}
\forall \varphi \in \widetilde{L}^2(\mu), \qquad -\left \langle \cL \varphi, \varphi \right\rangle_{L^2(\mu)} \geq \frac{1}{\beta C_{\cM,V}} \| \varphi \|_{L^2(\mu)}^2,
\end{equation}
which shows that the spectral gap is larger or equal to~$C_{\cM,V}^{-1}$. In particular, the resolvent $\cL^{-1}$ is a well-defined operator on~$\widetilde{L}^2(\mu)$, and the following estimate holds:
\begin{equation}
\label{eq:resolvent_estimate}
\left\| \cL^{-1} \right\|_{\mathcal{B}\left(\widetilde{L}^2(\mu)\right)} \leq \beta C_{\cM,V}.
\end{equation}
Here and in the sequel, for a given Banach space~$X$, we denote by $\mathcal{B}(X)$ the Banach space of bounded operators on~$X$, endowed with the norm 
\[
\| A \|_{\mathcal{B}(X)} = \sup_{x \in X \backslash\{0\}}\frac{\| Ax\|_X}{\|x\|_X}.
\]

\subsection{Self-diffusion}

The positions~$q_t$ are restricted to the periodic domain~$\cM$ and are therefore uniformly bounded in time. To obtain a diffusive behavior from the evolution of~$q_t$, we consider the following additive functional defined on the whole space~$\R^{d}$: starting from $Q_0 = q_0$,
\begin{equation}
  \label{eq:def_Q_t}
  Q_t = Q_0 - \beta \int_0^t \nabla V(q_s) \, ds + \sqrt{2} \, W_t.
\end{equation}
The difference with $q_t$ is that $Q_t$ is not reprojected in~$\cM$ by the periodization procedure (By this, we mean that we do not choose among all the images of $Q_t$ by translations of the lattice $L\mathbb{Z}^d$ the one for which all components are in the interval~$[0,L)$). The diffusion tensor is then given by the following limit (provided it exists): 
\begin{equation}
\label{eq:def_diffusion_coeff}
\mathscr{D} = \lim_{t \to +\infty} \mathbb{E}\left(\frac{Q_t-Q_0}{\sqrt{t}} \otimes \frac{Q_t-Q_0}{\sqrt{t}}\right),
\end{equation}
where the expectation is over all realizations of the continuous dynamics~\eqref{eq:dynamics}, starting from initial conditions distributed according to the Boltzmann-Gibbs measure~\eqref{eq:Gibbs}. The following result shows that the diffusion tensor~\eqref{eq:def_diffusion_coeff} is well defined, and naturally arises in a diffusive time-rescaling of the dynamics~\eqref{eq:dynamics}.

\begin{theorem}
\label{thm:cv_faible_processus}
Consider for $\varepsilon > 0$ the diffusively rescaled process $Q_t^\varepsilon = \varepsilon Q_{t/\varepsilon^2}$. Then, as $\varepsilon \to 0$, the process $Q_t^\varepsilon$ starting from a given initial condition $Q_0$ weakly converges on finite time intervals to an effective Brownian motion starting from~$Q_0$ and with covariance matrix $\mathscr{D}$ given by~\eqref{eq:def_diffusion_coeff}. Moreover, $\mathscr{D}$ is a real, positive definite $dN \times dN$ matrix, satisfying 
\[
0 < \mathscr{D} \leq 2\,\Id
\]
in the sense of symmetric matrices, and which can alternatively be expressed as
\begin{equation}
\label{eq:def_Einstein_continuous}
\mathscr{D} = 2\left( \mathrm{Id} - \beta^2 \int_0^{+\infty} \mathbb{E}\left[ \nabla V(q_t) \otimes \nabla V(q_0) \right] dt \right),
\end{equation}
where the expectation is over all realizations of the continuous dynamics~\eqref{eq:dynamics}, starting from initial conditions distributed according to the Boltzmann-Gibbs measure~\eqref{eq:Gibbs}.
\end{theorem}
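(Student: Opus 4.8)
The plan is to reduce the study of the additive functional $Q_t - Q_0 = -\beta\int_0^t\nabla V(q_s)\,ds + \sqrt2\,W_t$ from \eqref{eq:def_Q_t} to a martingale problem via the corrector (Poisson equation) method. First I would note that each component of $\nabla V$ is centered, since $\int_\cM \nabla V\,\rme^{-\beta V}\,dq = -\beta^{-1}\int_\cM \nabla(\rme^{-\beta V})\,dq = 0$ by periodicity, so that $\nabla V \in \widetilde L^2(\mu)^{dN}$. The resolvent bound \eqref{eq:resolvent_estimate} then provides a vector-valued solution $\Phi = \cL^{-1}\nabla V$ of the Poisson equation $\cL\Phi = \nabla V$; elliptic regularity and the smoothness of $V$ on the compact manifold $\cM$ make $\Phi$ smooth, hence bounded together with its derivatives. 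Applying It\^o's formula to $\Phi(q_t)$ and using $\cL\Phi = \nabla V$ yields $\Phi(q_t)-\Phi(q_0) = \int_0^t\nabla V(q_s)\,ds + \sqrt2\int_0^t\nabla\Phi(q_s)\,dW_s$, which substituted into \eqref{eq:def_Q_t} produces the decomposition
\[
Q_t - Q_0 = -\beta\bigl(\Phi(q_t)-\Phi(q_0)\bigr) + M_t, \qquad M_t = \sqrt2\int_0^t\bigl(\Id + \beta\,\nabla\Phi(q_s)\bigr)\,dW_s,
\]
where the first term is uniformly bounded and $M_t$ is a martingale with stationary increments whose quadratic covariation is $\langle M\rangle_t = 2\int_0^t(\Id+\beta\nabla\Phi)(\Id+\beta\nabla\Phi)^{\mathrm T}(q_s)\,ds$.

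Second, I would prove the invariance principle. The rescaled remainder $\varepsilon\beta(\Phi(q_{t/\varepsilon^2})-\Phi(q_0))$ is $O(\varepsilon)$ uniformly in $t$ and vanishes, so $Q^\varepsilon_t$ has the same weak limit as $\varepsilon M_{t/\varepsilon^2}$. Since $q_s$ is stationary and ergodic, the ergodic theorem gives $t^{-1}\langle M\rangle_t \to 2\,\E_\mu[(\Id+\beta\nabla\Phi)(\Id+\beta\nabla\Phi)^{\mathrm T}]$ almost surely, and the functional central limit theorem for martingales (the Kipnis--Varadhan setting for reversible ergodic diffusions) yields weak convergence of $\varepsilon M_{t/\varepsilon^2}$ on finite time intervals to a Brownian motion with covariance $\mathscr{D} = 2\,\E_\mu[(\Id+\beta\nabla\Phi)(\Id+\beta\nabla\Phi)^{\mathrm T}]$; tightness follows from standard martingale moment bounds using the boundedness of $\nabla\Phi$. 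This establishes the convergence claim and exhibits $\mathscr{D}$ in a manifestly symmetric, positive semidefinite form.

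Third, I would derive the bounds and the equivalent formula. Fixing $\xi\in\R^{dN}$ and setting $u = \xi\cdot\Phi$, one has $\cL u = \xi\cdot\nabla V$ and $\xi^{\mathrm T}\mathscr{D}\xi = 2\,\E_\mu|\xi + \beta\nabla u|^2$. Integration by parts against $\mu$ gives $\E_\mu[\partial_\gamma u] = \beta\,\E_\mu[u\,\partial_\gamma V]$, hence $\E_\mu[\xi\cdot\nabla u] = \beta\,\E_\mu[u\,\cL u]$, and combining this with the energy identity \eqref{eq:quadratic_form} yields $\E_\mu[\xi\cdot\nabla u] = -\beta\,\|\nabla u\|_{L^2(\mu)}^2$. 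Substituting, the cross term cancels part of the square and leaves $\xi^{\mathrm T}\mathscr{D}\xi = 2\bigl(|\xi|^2 - \beta^2\|\nabla u\|_{L^2(\mu)}^2\bigr)$, which gives $\mathscr{D}\leq 2\,\Id$ at once; strict positivity follows because $\xi^{\mathrm T}\mathscr{D}\xi=0$ forces $\nabla u \equiv -\xi/\beta$ to be constant on the torus, which is impossible for a periodic $u$ unless $\xi=0$. Finally, for \eqref{eq:def_Einstein_continuous} I would write $\E_\mu[\nabla V(q_t)\otimes\nabla V(q_0)] = \langle\rme^{t\cL}\nabla V,\nabla V\rangle$, integrate in $t$ using $\int_0^{+\infty}\rme^{t\cL}\,dt = -\cL^{-1}$ on $\widetilde L^2(\mu)$ (licit thanks to the spectral gap \eqref{eq:spectral_gap}), and invoke $\cL^{-1}\nabla V = \Phi$ together with \eqref{eq:quadratic_form} to identify $\beta^2\int_0^{+\infty}\E_\mu[\nabla V(q_t)\otimes\nabla V(q_0)]\,dt$ with the correction term $\beta^2\,\E_\mu[\nabla\Phi^{\mathrm T}\nabla\Phi]$ appearing above, which reproduces \eqref{eq:def_Einstein_continuous}.

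The main obstacle is the functional (pathwise) statement in Theorem~\ref{thm:cv_faible_processus}, namely the full invariance principle rather than a one-time central limit theorem: one must pair the martingale central limit theorem with tightness of the rescaled process and with the negligibility of the corrector boundary term. The existence and regularity of $\Phi$, guaranteed here by the spectral gap \eqref{eq:spectral_gap} and the smoothness of $V$, is precisely what places us in the favorable regime and reduces the remaining assertions (the bounds and the Green--Kubo identity) to integration by parts and spectral calculus.
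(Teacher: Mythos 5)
Your proposal is correct and follows essentially the same route as the paper: the Poisson equation $\cL\Phi=\nabla V$ (the paper's $\Phi_0$ up to sign), the It\^o decomposition of $Q_t-Q_0$ into a bounded corrector term plus a martingale, the ergodic limit of the quadratic variation combined with a martingale functional central limit theorem and tightness, integration by parts plus the Dirichlet-form identity for the bounds $0<\mathscr{D}\leq 2\,\Id$, and the spectral identity $\cL^{-1}=-\int_0^{+\infty}\rme^{t\cL}\,dt$ for the Green--Kubo formula. The only differences are cosmetic (sign and $\beta$ conventions, and invoking the martingale invariance principle as a named theorem where the paper spells out the exponential-martingale and Prohorov arguments).
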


The proof of this statement is standard, and follows from arguments presented in~\cite[Chapter~3]{BLP} for instance. We nonetheless provide a short proof in Section~\ref{sec:cv_faible_processus} since the proofs of the discrete counterparts of Theorem~\ref{thm:cv_faible_processus} rely on an appropriate extension of the argument used in the continuous case (see Section~\ref{sec:Einstein}).

A straightforward consequence of Theorem~\ref{thm:cv_faible_processus} is that the self-diffusion constant $\mathcal{D}$, defined as the average mean-square displacement of the individual particles, is well defined and has two equivalent expressions:
\begin{align}
\mathcal{D} = \frac{1}{2dN}\mathrm{Tr}(\mathscr{D})& = \lim_{t \to +\infty} \mathbb{E}\left(\frac{1}{2dNt} \sum_{i=1}^{N} (Q_{i,t} - Q_{i,0})^2 \right)  \label{eq:Einstein} \\
& = 1 - \frac{\beta^2}{dN} \int_0^{+\infty} \mathbb{E}\Big[ \nabla V(q_t)^T \nabla V(q_0)\Big] dt. \label{eq:GK}
\end{align}
The expression~\eqref{eq:Einstein} is called the \emph{Einstein formula}. The second expression~\eqref{eq:GK} involves an integrated autocorrelation function. In accordance with the standard physics and chemistry nomenclature, we call~\eqref{eq:GK} the \emph{Green-Kubo formula} for the self-diffusion in the sequel.

\subsection{Numerical estimation of the self-diffusion}

In order to compute approximations of formulas such as~\eqref{eq:Einstein} or~\eqref{eq:GK}, the first task is to numerically integrate realizations of the continuous dynamics~\eqref{eq:dynamics}. The most straightforward way would be to resort to a Euler-Maruyama scheme: given a time-step $\Delta t > 0$ and denoting by $q^n$ an approximation of $q_{n\Delta t}$, this scheme reads
\begin{equation}
\label{eq:EM}
q^{n+1} = q^n - \beta \dt\, \nabla V(q^n) + \sqrt{2\dt} \, G^n,
\end{equation}
where $(G^n)_{n \geq 0}$ is a sequence of independent and identically distributed (i.i.d.) $dN$-dimensional standard Gaussian random variables. However, this simple scheme has been shown to fail to be ergodic when the dynamics is considered on unbounded spaces and the potential energy function is not globally Lipschitz~\cite{MSH02}. In simulations of Brownian dynamics for ionic solutions, potential energy functions with Coulomb-type singularities are used and it has been observed that the energy blows up along trajectories of~\eqref{eq:EM} (see Section~\ref{sec:num_real_case}).

A way to stabilize the Euler-Maruyama scheme is to consider the configuration~\eqref{eq:EM} as a proposal move in a Metropolis-Hastings algorithm~\cite{MRRTT53,Hastings70}. This is precisely the Smart MC algorithm proposed in~\cite{RDF78} which was later called Metropolis adjusted Langevin algorithm in the computational statistics literature~\cite{RR96}. More precisely, starting from a configuration $q^n \in \cM$ (in fact seen as an element of~$\R^{dN}$), a new configuration $\widetilde{q}^{n+1} \in \R^{dN}$ is proposed according to~\eqref{eq:EM}, and then accepted with probability
\[
R_\dt\left(q^n,\widetilde{q}^{n+1}\right) = \min\left( \frac{\rme^{-\beta V(\widetilde{q}^{n+1})}T_\dt(\widetilde{q}^{n+1},q^n)}{\rme^{-\beta V(q^n)}T_\dt(q^n,\widetilde{q}^{n+1})},1\right),
\]
where
\[
T_\dt(q,q') = \left(\frac{1}{4\pi\dt}\right)^{dN/2} \exp \left (
-\frac{|q'-q+ \beta \dt \nabla V(q)|^2}{4\dt} \right )
\]
is the probability transition of the Markov chain~\eqref{eq:EM}. When the proposition is accepted, we project~$\widetilde{q}^{n+1}$ into the periodic simulation cell~$\cM$. If the proposal is rejected, the previous configuration is counted twice: $q^{n+1} = q^n$ (It is very important to count rejected configuration as many times as needed to ensure that the Boltzmann-Gibbs measure~$\mu$ is invariant). In conclusion,
\begin{equation}
\label{eq:mEM}
q^{n+1} = q^n + \ind_{U^n \leq R_\dt\left(q^n,\widetilde{q}^{n+1}\right)}\left(-\beta \dt \, \nabla V(q^n) + \sqrt{2\dt} \, G^n\right),
\end{equation}
where $U^n$ are i.i.d. uniform random variables in~$[0,1]$, and $\ind_{U^n \leq \alpha}$ is an indicator function whose value is~1 when $U^n \leq \alpha$ and~0 otherwise. The average rejection rate is 
\begin{equation}
\label{eq:avg_rejection_rate}
1 - R_\dt\left(q^n,\widetilde{q}^{n+1}\right) = 1 - \mathbb{E}_U\Big(\ind_{U \leq R_\dt\left(q^n,\widetilde{q}^{n+1}\right)}\Big).
\end{equation}
In the formula~\eqref{eq:mEM}, $q^{n+1}$ is considered as an element of the periodic box~$\cM$, while the proposed configuration~$\widetilde{q}^{n+1}$ is not reprojected into the simulation cell~$\cM$ and is therefore considered as an element of~$\R^{dN}$ (see the remark after~\eqref{eq:def_Q_t}). 

In order to avoid confusion, we call the scheme~\eqref{eq:mEM} ``Metropolized Euler-Maruyama'' in the sequel, and denote by $P_\dt$ its evolution operator:
\[
P_\dt f(q) = \mathbb{E}\Big( f\left(q^{n+1}\right) \, \Big| \, q^n = q \Big).
\]
By construction, the measure~\eqref{eq:Gibbs} is an invariant probability measure for this scheme, which is a reversible Markov chain. We refer to~\cite{BH13} for a study of the ergodic properties of the dynamics (in the more complicated case of dynamics on the full configuration space~$\mathbb{R}^{dN}$, subjected to a confining potential). 

Of course, the fact that some configurations are rejected destroys the trajectorial accuracy of the dynamics, see~\cite{BV09} for precise statements. The resulting strong errors and, as a consequence, errors on finite time correlation functions have been quantified in~\cite{BV09,BE12}, with prefactors which unfortunately depend on time. The estimates provided by these works therefore do not provide error estimates on diffusion coefficients, obtained either as infinite time integrals of correlation functions as in the Green-Kubo formula~\eqref{eq:GK} or as the infinite time average mean square displacement as in Einstein's formula~\eqref{eq:Einstein}.

The next section quantifies the errors in the approximation of~\eqref{eq:Einstein} and~\eqref{eq:GK} when the Metropolized Euler-Maruyama scheme is used. Although the formulas~\eqref{eq:Einstein} and~\eqref{eq:GK} are equivalent for continuous dynamics, they lead to different numerical methods. Let us already emphasize that the errors on the diffusion coefficients are in fact determined by the expansion of the evolution operator~$P_{\Delta t}$. This expansion is the same as the one used to establish weak error estimates. From a technical viewpoint, the techniques used in the proofs of our main results are therefore quite different from the techniques of~\cite{BV09,BE12}, which are based on strong error estimates obtained with Gronwall's lemma.

\section{A priori error estimates on the self-diffusion}
\label{sec:a_priori}

As discussed in~\cite{LMS13}, error bounds on transport properties in fact depend on approximation properties of the evolution operator (similar to the one used to prove weak error estimates), rather than strong error estimates. A key building block in this framework is the following expansion of the evolution operator, obtained by a slight extension of~\cite[Lemma~4.7]{BV09} and~\cite[Lemma~5.5]{BH13}.

\begin{lemma}
\label{lem:expansion_Pdt}
There exist an operator~$A$ and $\dt^* > 0$ such that, for any $0 < \dt \leq \dt^*$ and any smooth function~$\psi$, 
\begin{equation}
  \label{eq:expansion_P_dt}
  P_\dt \psi = \psi + \dt \, \cL \psi + \dt^2 A\psi + \dt^{5/2} r_{\psi,\dt}, 
\end{equation}
with a remainder $r_{\psi,\dt}$ uniformly bounded for $0 < \dt \leq \dt^*$. Moreover,
\begin{equation}
  \label{eq:vanishing_A}
  \int_\cM A \psi \, d\mu = 0.
\end{equation}
Finally, the average rejection~\eqref{eq:avg_rejection_rate} rate scales as $\dt^{3/2}$: There is a bounded function $\overline{\xi}$ such that, for any $p \in \mathbb{N}$, there exist~$C_p \geq 0$ and $\dt_p^* > 0$ for which
\begin{equation}
\label{eq:estimate_R_lemma}
\forall \, 0 < \dt \leq \dt_p^*, \qquad \mathbb{E}_G \left| R_\dt\left(q,q-\beta \dt\,\nabla V(q) + \sqrt{2\dt}\,G\right) \Big] - 1 + \dt^{3/2} \overline{\xi}(q) \right|^p \leq C_p \dt^{2p},
\end{equation}
where the expectation is over all possible realizations of the standard $dN$-dimensional Gaussian random variable~$G$. 
\end{lemma}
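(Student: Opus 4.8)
The starting point is to separate the proposal step from the accept/reject step. Writing the proposed move as $\widetilde q = q - \beta\dt\,\nabla V(q) + \sqrt{2\dt}\,G$, the definition~\eqref{eq:mEM} of the scheme gives
\[
P_\dt\psi(q) = \E_G\big[\psi(\widetilde q)\big] - \E_G\Big[\big(1 - R_\dt(q,\widetilde q)\big)\big(\psi(\widetilde q) - \psi(q)\big)\Big],
\]
the first term being the evolution operator of the plain Euler--Maruyama scheme~\eqref{eq:EM}. I would first expand this unadjusted part: Taylor-expanding $\psi(\widetilde q) - \psi(q)$ in the displacement $\widetilde q - q = -\beta\dt\,\nabla V(q) + \sqrt{2\dt}\,G$, which is of size $\sqrt\dt$, and averaging over the Gaussian $G$, all odd moments vanish, so only integer powers of $\dt$ appear. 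The order-$\dt$ term reconstructs $-\beta\nabla V\cdot\nabla\psi + \Delta\psi = \cL\psi$ from~\eqref{eq:generator}, and collecting the order-$\dt^2$ contributions (from the second-, third- and fourth-order terms of the Taylor expansion combined with the Gaussian moments of $G$) defines a differential operator $\widetilde A$. Taylor's formula with integral remainder, the finiteness of all Gaussian moments, and uniform bounds on the derivatives of $V$ and $\psi$ on the compact manifold $\cM$, bound the remainder of this part at order $\dt^3$.

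The core of the argument is the expansion of the acceptance probability $R_\dt(q,\widetilde q) = \min(\rme^\Lambda,1)$, where $\Lambda = -\beta(V(\widetilde q) - V(q)) + \log\big(T_\dt(\widetilde q,q)/T_\dt(q,\widetilde q)\big)$. Using the identity $\widetilde q - q + \beta\dt\,\nabla V(q) = \sqrt{2\dt}\,G$, the Gaussian normalisations cancel and $\Lambda$ reduces to a combination of $V(\widetilde q) - V(q)$, of $\nabla V(\widetilde q) + \nabla V(q)$ contracted against $G$, and of $|\nabla V(\widetilde q) + \nabla V(q)|^2$. Expanding each factor in powers of $\sqrt\dt$, the key structural fact --- reflecting the reversibility built into the Metropolis ratio --- is that the contributions of order $\sqrt\dt$ and of order $\dt$ cancel identically, leaving
\[
\Lambda = (2\dt)^{3/2}\,\Upsilon(q,G) + O(\dt^2), \qquad \Upsilon(q,G) = -\frac{\beta^2}{4}\,G^{T}\Hess V(q)\,\nabla V(q) + \frac{\beta}{12}\,D^3V(q)[G,G,G],
\]
a cubic polynomial in $G$ with smooth, $q$-dependent coefficients. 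Since $1 - R_\dt = \max(1 - \rme^\Lambda,0)$, this yields the pointwise-in-$G$ expansion $1 - R_\dt = (2\dt)^{3/2}\max(-\Upsilon(q,G),0) + \widetilde\rho_{\dt}$, and the whole difficulty is to show that the remainder satisfies $\|\widetilde\rho_\dt\|_{L^p(G)} = O(\dt^2)$ uniformly in $q\in\cM$, for every $p$.

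I expect this last estimate to be the main obstacle, for two reasons: the order-$\sqrt\dt$ and order-$\dt$ cancellations in $\Lambda$ must be tracked exactly, and the map $\Lambda \mapsto \max(1-\rme^\Lambda,0)$ is only Lipschitz, so the truncation at $1$ must be controlled using $|\max(1-\rme^a,0) - \max(1-\rme^b,0)| \leq |a-b|$ rather than smoothness, while the polynomial growth of $\Upsilon$ in $G$ is absorbed by the Gaussian tails. Granting it, the Metropolization correction is handled by Cauchy--Schwarz: writing $\psi(\widetilde q) - \psi(q) = \sqrt{2\dt}\,\nabla\psi(q)\cdot G + O(\dt)$, the leading term $(2\dt)^{3/2}\max(-\Upsilon,0)$ correlates with the $\sqrt{2\dt}\,\nabla\psi\cdot G$ factor to produce a genuine order-$\dt^2$ operator $B\psi(q) = 4\,\E_G\big[\max(-\Upsilon(q,G),0)\,\nabla\psi(q)\cdot G\big]$, while $\E_G[\widetilde\rho_\dt(\psi(\widetilde q)-\psi(q))]$ is $O(\dt^{5/2})$ since $\|\psi(\widetilde q)-\psi(q)\|_{L^2(G)} = O(\sqrt\dt)$. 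Setting $A = \widetilde A - B$ gives~\eqref{eq:expansion_P_dt}, the remainder $\dt^{5/2}$ being exactly the leading half-integer power introduced by the accept/reject step, which dominates the $O(\dt^3)$ error of the unadjusted part.

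Two clean consequences remain. Averaging the pointwise expansion of $1 - R_\dt$ over $G$ gives~\eqref{eq:estimate_R_lemma} with $\overline\xi(q) = 2^{3/2}\,\E_G\big[\max(-\Upsilon(q,G),0)\big]$, a bounded function of $q$ on $\cM$ by smoothness of $V$ and finiteness of the Gaussian moments, the $L^p$ control being inherited from that of $\widetilde\rho_\dt$. Finally, \eqref{eq:vanishing_A} follows from the invariance of $\mu$ from~\eqref{eq:Gibbs} under $P_\dt$: integrating~\eqref{eq:expansion_P_dt} against $\mu$ yields $0 = \dt\int_\cM\cL\psi\,d\mu + \dt^2\int_\cM A\psi\,d\mu + O(\dt^{5/2})$ for all small $\dt$; the order-$\dt$ term vanishes because $\int_\cM\cL\psi\,d\mu = 0$, and identifying the order-$\dt^2$ coefficient forces $\int_\cM A\psi\,d\mu = 0$.
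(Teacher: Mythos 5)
Your proposal follows essentially the same route as the paper's proof: the same splitting of $P_\dt$ into the unadjusted Euler--Maruyama part plus the rejection correction, the same Taylor expansion of the log-acceptance ratio with the cancellation at orders $\dt^{1/2}$ and $\dt$ leaving a $\dt^{3/2}$ leading term $\xi=-2^{3/2}\Upsilon$, the same Lipschitz/second-order control of $x\mapsto(1-\rme^{x})_+$ to isolate $\overline{\xi}(q)=\E_G[\xi_+(q,G)]$ with an $O(\dt^2)$ remainder in every $L^p(G)$, and the same use of the invariance of $\mu$ under $P_\dt$ to obtain~\eqref{eq:vanishing_A}. The argument is correct; the precise constants in $A$ are immaterial, as the paper itself remarks.
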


The precise expression of the operator~$A$ is unimportant. It is however given in the proof of this result, see Section~\ref{sec:proof_expansion_dt}. Note that the numerical scheme can be proved to be weakly first order accurate by relying on standard techniques~\cite{MT04}, in view of the equality 
\[
P_\dt\psi - \rme^{\dt \cL}\psi = \dt^2 \left(A - \frac12 \cL^2\right)\psi + \dt^{5/2} \widetilde{r}_{\psi,\dt}.
\]

Another important result which we will repeatedly use in the analysis below is the following uniform-in-$\dt$ geometric ergodicity of the Metropolized Euler-Maruyama scheme, easily obtained by adapting the results of~\cite{BH13} to the case of compact position spaces (for completeness, we nonetheless provide elements of proof in Section~\ref{sec:proof_lem_geom_ergod}). To state the result, we introduce the following functional space
\[
\widetilde{L}^\infty(\cM) = \left\{ f\in L^\infty(\cM) \, \left| \, \int_\cM f \, d\mu = 0 \right.\right\}.
\]

\begin{lemma}
\label{lem:geom_ergod}
There exists $\dt^* > 0$ and $C,\lambda > 0$ such that, for any $0 < \dt \leq \dt^*$, for all $n \in \N$ and any $f \in \widetilde{L}^{\infty}(\cM)$,
\begin{equation}
\label{eq:geom_ergod}
\left \| P_\dt^n f \right \|_{L^\infty} \leq C \, \rme^{-\lambda n \dt} \|f\|_{L^\infty}.
\end{equation}
As a consequence, there exists $K > 0$ such that
\begin{equation}
\label{eq:bound_discrete_generator}
\left\| \left(\frac{\Id - P_\dt}{\dt}\right)^{-1} \right\|_{\mathcal{B}(\widetilde{L}^{\infty})} \leq K.
\end{equation}
\end{lemma}

\subsection{Error estimates for the Green-Kubo formula}

We first give error estimates on~\eqref{eq:GK} by appropriately adapting the results from~\cite{LMS13}. The result is stated for two smooth observables $\psi,\varphi$ with average~0 with respect to~$\mu$. Define to this end
\[
\widetilde{C}^\infty(\mathcal{M}) = \left\{ \psi \in C^\infty(\mathcal{M}) \, \left| \int_\mathcal{M} \psi \, d\mu = 0 \right. \right\} \subset \widetilde{L}^{\infty}(\cM).
\] 
Error estimates for~\eqref{eq:GK} are obtained by setting $\psi = \varphi = \partial_{q_{i,\alpha}} V$, with $1 \leq i \leq N$ and $1 \leq \alpha \leq d$. Note indeed that a simple integration by parts shows that $\partial_{q_{i,\alpha}} V$ has average~0 with respect to~$\mu$, so $\partial_{q_{i,\alpha}} V \in \widetilde{C}^{\infty}(\cM)$.

\begin{theorem}
\label{thm:GK}
Consider two observables $\psi,\varphi \in \widetilde{C}^{\infty}(\cM)$, and define the modified observable
\[
\widetilde{\psi}_\dt = \left(\Id + \dt\, A\mathcal{L}^{-1}\right)\psi,
\]
where the operator~$A$ is defined in~\eqref{eq:expansion_P_dt}. Then, there exists $\dt^* > 0$ such that, for any $0 < \dt \leq \dt^*$,
\[
\int_0^{+\infty} \mathbb{E}\Big[\psi(q_t)\,\varphi(q_0)\Big]  dt = \dt \sum_{n=0}^{+\infty} \mathbb{E}_\dt \left[ \widetilde{\psi}_\dt(q^n)\, \varphi(q^0) \right] + \dt^{3/2} r_{\psi,\varphi,\dt},
\]
with $r_{\psi,\varphi,\dt}$ uniformly bounded (with respect to~$\dt$), and where the expectation on the left hand side of the above equation is with respect to initial conditions $q_0 \sim \mu$ and over all realizations of the dynamics~\eqref{eq:dynamics}, while the expectation on the right hand side is with respect to initial conditions $q^0 \sim \mu$ and over all realizations of the Metropolized Euler-Maruyama scheme~\eqref{eq:mEM}.
\end{theorem}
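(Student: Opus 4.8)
The plan is to recognize that both sides of the claimed identity are, up to the remainder, evaluations of resolvents tested against~$\varphi$ in $L^2(\mu)$, and to compare the continuous resolvent $-\cL^{-1}$ with the discrete one through the expansion~\eqref{eq:expansion_P_dt}. First I would rewrite the left-hand side: since $q_0 \sim \mu$ and $\mu$ is invariant, $\E[\psi(q_t)\varphi(q_0)] = \left\langle \rme^{t\cL}\psi, \varphi\right\rangle_{L^2(\mu)}$, and the spectral gap~\eqref{eq:spectral_gap} makes $\rme^{t\cL}\psi$ decay exponentially in $L^2(\mu)$, so the time integral converges absolutely and equals $\left\langle\left(\int_0^{+\infty}\rme^{t\cL}\,dt\right)\psi,\varphi\right\rangle_{L^2(\mu)} = -\left\langle\cL^{-1}\psi,\varphi\right\rangle_{L^2(\mu)}$, using $\int_0^{+\infty}\rme^{t\cL}\,dt = -\cL^{-1}$ on $\widetilde{L}^2(\mu)$.

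For the right-hand side, the invariance of~$\mu$ under the reversible chain gives $\E_\dt[\widetilde{\psi}_\dt(q^n)\varphi(q^0)] = \left\langle P_\dt^n\widetilde{\psi}_\dt,\varphi\right\rangle_{L^2(\mu)}$, and the uniform geometric ergodicity of Lemma~\ref{lem:geom_ergod} makes the Neumann series $\sum_{n\geq 0}P_\dt^n = (\Id - P_\dt)^{-1}$ converge on $\widetilde{L}^\infty(\cM)$ for each fixed $\dt$. Hence the discrete sum equals $\left\langle B_\dt^{-1}\widetilde{\psi}_\dt,\varphi\right\rangle_{L^2(\mu)}$, where $B_\dt := (\Id - P_\dt)/\dt$ satisfies $\|B_\dt^{-1}\|_{\mathcal{B}(\widetilde{L}^\infty)}\leq K$ uniformly in $\dt$ by~\eqref{eq:bound_discrete_generator}. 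Here I use that $\widetilde{\psi}_\dt$ has zero average by~\eqref{eq:vanishing_A}, so it belongs to $\widetilde{L}^\infty(\cM)$ and the inverse applies.

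The heart of the proof is then a comparison of $-\cL^{-1}\psi$ with $B_\dt^{-1}\widetilde{\psi}_\dt$. Dividing~\eqref{eq:expansion_P_dt} by $\dt$ yields, for any smooth $f$, $B_\dt f = -\cL f - \dt\,Af - \dt^{3/2}r_{f,\dt}$. I would apply this to $f = -\cL^{-1}\psi$, which is smooth by elliptic regularity (since $-\cL$ is uniformly elliptic on the compact manifold~$\cM$ and $\psi$ is smooth), obtaining $B_\dt(-\cL^{-1}\psi) = \psi + \dt\,A\cL^{-1}\psi - \dt^{3/2}r_{f,\dt} = \widetilde{\psi}_\dt - \dt^{3/2}r_{f,\dt}$. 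Applying $B_\dt^{-1}$ then gives $B_\dt^{-1}\widetilde{\psi}_\dt = -\cL^{-1}\psi + \dt^{3/2}B_\dt^{-1}r_{f,\dt}$. Testing against~$\varphi$ and rearranging produces exactly the stated identity, with $r_{\psi,\varphi,\dt} = -\left\langle B_\dt^{-1}r_{f,\dt},\varphi\right\rangle_{L^2(\mu)}$.

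It remains to check that this remainder is bounded uniformly in $\dt$, which is where the care lies. The function $r_{f,\dt}$ is uniformly bounded in $L^\infty$ by Lemma~\ref{lem:expansion_Pdt} applied to the fixed smooth function $f=-\cL^{-1}\psi$; moreover $\dt^{3/2}r_{f,\dt} = \widetilde{\psi}_\dt - B_\dt f$ has zero average (both $\widetilde{\psi}_\dt$ and $B_\dt f$ do, the latter by invariance of~$\mu$), so $r_{f,\dt}\in\widetilde{L}^\infty(\cM)$ and $\|B_\dt^{-1}r_{f,\dt}\|_{L^\infty}\leq K\|r_{f,\dt}\|_{L^\infty}$ stays bounded; Cauchy--Schwarz in $L^2(\mu)$ then controls $|r_{\psi,\varphi,\dt}|$ by $K\|r_{f,\dt}\|_{L^\infty}\|\varphi\|_{L^2(\mu)}$. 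The main obstacle I anticipate is precisely this interplay: ensuring the expansion~\eqref{eq:expansion_P_dt} can be invoked on the non-explicit function $\cL^{-1}\psi$ with a genuinely $\dt$-independent bound on its remainder, which requires the elliptic regularity estimate on $\cL^{-1}\psi$ together with enough control on the $A$-dependent correction to keep every term inside $\widetilde{L}^\infty(\cM)$, the only space on which the uniform resolvent bound~\eqref{eq:bound_discrete_generator} is available.
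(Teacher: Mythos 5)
Your proposal is correct and follows essentially the same route as the paper: both identify the left-hand side with $-\left\langle \cL^{-1}\psi,\varphi\right\rangle_{L^2(\mu)}$ via~\eqref{eq:integrated_autocovariance}, apply the expansion~\eqref{eq:expansion_P_dt} to the smooth function $\cL^{-1}\psi$ to see that $\bigl((\Id-P_\dt)/\dt\bigr)(-\cL^{-1}\psi)=\widetilde{\psi}_\dt+\dt^{3/2}r_{\cL^{-1}\psi,\dt}$, and then invert using the uniform bound~\eqref{eq:bound_discrete_generator} on $\widetilde{L}^\infty(\cM)$ after checking the remainder has zero mean. The points you flag as delicate (elliptic regularity of $\cL^{-1}\psi$, zero average of the remainder) are exactly the ones the paper's proof handles, and in the same way.
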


As a corollary, we obtain first order error bounds on the computation of the self-diffusion through~\eqref{eq:GK}:
\begin{equation}
  \label{eq:error_bounds_GK}
  \mathcal{D} = \mathcal{D}^{\rm GK}_\dt + \dt\, \widetilde{\mathcal{D}}^{{\rm GK},1} + \dt^{3/2}\, \widetilde{\mathcal{D}}^{\rm GK}_\dt,
\end{equation}
where $\widetilde{\mathcal{D}}^{\rm GK}_\dt$ is uniformly bounded for~$\dt$ sufficiently small, and where the numerically computed self-diffusion reads
\begin{equation}
\label{eq:approx_GK}
\mathcal{D}^{\rm GK}_\dt = \Id - \frac{\beta^2}{dN} \dt \sum_{n=0}^{+\infty} \mathbb{E}_\dt \left[ \nabla V(q^n)^T \nabla V(q^0) \right].
\end{equation}
The expression of the correction term is obtained by replacing the modified observable by its expression:
\[
\widetilde{\mathcal{D}}^{{\rm GK},1}_\dt = - \frac{\beta^2}{dN} \dt \sum_{n=0}^{+\infty} \mathbb{E}_\dt \left[ F(q^n)^T \nabla V(q^0) \right], 
\qquad 
F = A\cL^{-1} \nabla V.
\]
The appearance of subleading fractional correction term in~\eqref{eq:error_bounds_GK} (here, of order~$\dt^{3/2}$) is typical of Metropolis algorithms, and is usually not encountered for standard, un-Metropolized discretizations of SDEs (compare with the results of~\cite{LMS13}).

\subsection{Error bounds on the Einstein formula}

In this section, we investigate the discretization error made when using the Metropolized Euler-Maruyama scheme to approximate the self-diffusion using~\eqref{eq:Einstein}. In accordance with the definition~\eqref{eq:def_Q_t}, we introduce a discrete additive functional allowing to keep track of the diffuse behavior of the Markov chain: Starting from $Q^0 = q^0$,
\[
Q^n = \sum_{k=0}^{n-1} \delta_\dt\left(q^k,G^k,U^k\right),
\]
with
\begin{equation}
\label{eq:def_delta}
\delta_\dt\left(q^k,G^k,U^k\right) = \ind_{U^k \leq R_\dt\left(q^k,q^{k}-\beta \dt \, \nabla V(q^k) + \sqrt{2\dt} \, G^k\right)}\left(-\beta \dt \, \nabla V(q^k) + \sqrt{2\dt} \, G^k\right).
\end{equation}
While the Markov chain $(q^n)_{n \geq 0}$ defined by~\eqref{eq:mEM} remains in~$\cM$, the additive functional~$(Q^n)_{n \geq 0}$ has values in~$\R^{dN}$. The diffusion tensor actually computed by the numerical scheme is
\begin{equation}
\label{eq:def_D_Einstein_dt}
\mathscr{D}^{\rm Einstein}_\dt = \lim_{n \to +\infty} \mathbb{E}_\dt\left[\frac{Q^n - Q^0}{\sqrt{n\dt}} \otimes \frac{Q^n - Q^0}{\sqrt{n\dt}} \right], 
\end{equation}
where, as in Theorem~\ref{thm:GK}, the expectation on the right hand side is with respect to initial conditions $Q^0 = q^0 \sim \mu$ and for all realizations of the Metropolized Euler-Maruyama scheme.

\begin{theorem}
\label{thm:Einstein}
There exists $\dt^* > 0$ such that, for any $0 < \dt \leq \dt^*$, 
\[
\mathscr{D} = \mathscr{D}^{\rm Einstein}_\dt + \dt \, \widetilde{\mathscr{D}}_\dt,
\]
where the coefficients of the symmetric matrix $\widetilde{\mathscr{D}}_\dt \in \R^{dN \times dN}$ are uniformly bounded.
\end{theorem}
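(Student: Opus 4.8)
The plan is to transpose to the Markov chain the corrector/martingale argument that underlies the continuous tensor in Theorem~\ref{thm:cv_faible_processus}. Write the one-step increment as $\delta^k := \delta_\dt(q^k,G^k,U^k)$ and set $b_\dt(q) := \mathbb{E}[\delta^0 \mid q^0 = q]$, the conditional mean displacement. A preliminary step is to check, from the reversibility (detailed balance) of the Metropolized chain under the reflection $\delta \mapsto -\delta$, that $\int_\cM b_\dt\,d\mu = 0$ componentwise, so that the discrete corrector $\Phi_\dt := (\Id - P_\dt)^{-1} b_\dt$ is a well-defined $\R^{dN}$-valued function with entries in $\widetilde{L}^\infty(\cM)$. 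Its $L^\infty$ norm is bounded uniformly in $\dt$: indeed $\Phi_\dt = \dt^{-1}\left(\frac{\Id-P_\dt}{\dt}\right)^{-1} b_\dt$, the acceptance estimate~\eqref{eq:estimate_R_lemma} together with the vanishing of the odd Gaussian moments gives $b_\dt = -\beta\dt\,\nabla V + O(\dt^{5/2})$, and the uniform resolvent bound~\eqref{eq:bound_discrete_generator} controls the rest. The Poisson identity $b_\dt = (\Id - P_\dt)\Phi_\dt$ then yields the decomposition $Q^n - Q^0 = \sum_{k=0}^{n-1} D^k + \Phi_\dt(q^0) - \Phi_\dt(q^n)$, where $D^k := \delta^k + \Phi_\dt(q^{k+1}) - \Phi_\dt(q^k)$ forms a stationary sequence of martingale increments, since $\mathbb{E}[D^k \mid q^k] = b_\dt(q^k) - (\Id - P_\dt)\Phi_\dt(q^k) = 0$.

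Because martingale increments are uncorrelated and $\Phi_\dt$ is uniformly bounded, the telescoping remainder contributes only $O(1)$ to the variance and the cross term $O(\sqrt{n})$, both negligible after division by $n\dt$ as $n \to +\infty$. This produces the exact one-step identity $\mathscr{D}^{\rm Einstein}_\dt = \dt^{-1}\,\mathbb{E}_\mu[D^0 \otimes D^0]$, with $D^0 = \delta^0 + \Phi_\dt(q^0 + \delta^0) - \Phi_\dt(q^0)$, the discrete analogue of the effective diffusivity of the corrected process.

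It then remains to expand this one-step covariance. Taylor expanding $\Phi_\dt(q^0 + \delta^0) - \Phi_\dt(q^0) = \nabla\Phi_\dt(q^0)\,\delta^0 + O(|\delta^0|^2)$ and using $\delta^0 = -\beta\dt\,\nabla V(q^0) + \sqrt{2\dt}\,G$ up to the acceptance correction, one integrates over $G$ and $U$. The leading contribution comes from $\mathbb{E}_{G,U}[\delta^0 \otimes \delta^0] = 2\dt\,\Id + O(\dt^2)$ (drift–noise cross terms vanish because $\mathbb{E}_G[G]=0$, third moments are $O(\dt^2)$, and the $\dt^{3/2}$ rejection correction of~\eqref{eq:estimate_R_lemma} enters only at order $\dt^{5/2}$), giving $\mathscr{D}^{\rm Einstein}_\dt = 2\,\mathbb{E}_\mu\big[(\Id + \nabla\Phi_\dt)(\Id + \nabla\Phi_\dt)^{T}\big] + O(\dt)$. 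Finally I would invoke the operator expansion~\eqref{eq:expansion_P_dt}, that is $\frac{\Id - P_\dt}{\dt} = -\cL - \dt\,A + O(\dt^{3/2})$, to show $\Phi_\dt = \beta\,\cL^{-1}\nabla V + O(\dt)$ in a norm controlling the first derivative, whence $\nabla\Phi_\dt = \nabla\Phi + O(\dt)$ with $\Phi = \beta\cL^{-1}\nabla V$ the continuous corrector. Identifying $2\,\mathbb{E}_\mu[(\Id + \nabla\Phi)(\Id + \nabla\Phi)^T]$ with the continuous tensor $\mathscr{D}$ of~\eqref{eq:def_Einstein_continuous} (this is exactly the corrector representation behind Theorem~\ref{thm:cv_faible_processus}) closes the argument; the symmetry and uniform boundedness of $\widetilde{\mathscr{D}}_\dt$ are automatic, since $D^0 \otimes D^0$ is symmetric and all remainders are uniform in $\dt$.

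The main obstacle I anticipate is the uniform-in-$\dt$ control of the corrector $\Phi_\dt$ at the level of its derivatives, needed both for the Taylor remainders and for the identification $\nabla\Phi_\dt \to \nabla\Phi$. The resolvent bound~\eqref{eq:bound_discrete_generator} only controls $\|\Phi_\dt\|_{L^\infty}$, so one must upgrade it to bounds on $\nabla\Phi_\dt$ and $\Hess\Phi_\dt$, for instance by differentiating the discrete Poisson equation and bootstrapping with the smoothing of $\cL^{-1}$ and the fact that $A$ is a differential operator of bounded order. This forces one to verify that the expansion~\eqref{eq:expansion_P_dt} holds with remainders bounded uniformly in $\dt$ in higher Sobolev norms, and in particular that the fractional $O(\dt^{3/2})$ and $O(\dt^{5/2})$ terms generated by the Metropolis acceptance rate do not degrade the final estimate below first order; a naive term-by-term treatment of the correlation series, whose contributions persist over $O(\dt^{-1})$ steps, would only yield $O(\dt^{1/2})$, which is why resumming into the resolvent $\big(\frac{\Id - P_\dt}{\dt}\big)^{-1}$ and exploiting its uniform boundedness is essential.
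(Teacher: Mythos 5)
Your skeleton matches the paper's: centering of the one-step increment, the discrete Poisson equation \eqref{eq:def_Phi_dt} for a corrector $\Phi_\dt \in \bigl(\widetilde{L}^\infty(\cM)\bigr)^{dN}$ (whose solvability rests on the vanishing $\mu$-average of the mean increment, proved in the paper by the lattice symmetrization $I_n = -I_{-n}$, which is the detailed-balance argument you sketch), the stationary martingale decomposition of $Q^n - Q^0$, and the resulting one-step identity $\mathscr{D}^{\rm Einstein}_\dt = \dt^{-1}\,\mathbb{E}_\mu\bigl[M^0_\dt \otimes M^0_\dt\bigr]$. Up to that point the proposal is sound.

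The gap is in the final expansion, and it is exactly the obstacle you flag yourself without resolving. You Taylor-expand $\Phi_\dt(q+\delta)-\Phi_\dt(q)$ around $q$, which requires $\nabla\Phi_\dt$ and $\Hess\Phi_\dt$ bounded uniformly in $\dt$; the resolvent bound \eqref{eq:bound_discrete_generator} only gives $L^\infty$ control, and your proposed fix (differentiate the discrete Poisson equation and bootstrap) does not go through as stated: $P_\dt$ contains the Metropolis acceptance step, whose fractional $\dt^{3/2}$ and $\dt^{5/2}$ terms involve the non-smooth function $\xi_+ = \max(0,\xi)$, and the paper explicitly records in Remark~\ref{rem:more_work} that extracting regularity of the corrector from \eqref{eq:def_Phi_dt} is not feasible as such. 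The paper's way around this is Lemma~\ref{lem:expansion_Phi_dt}: expand $\Phi_\dt = \Phi_0 + \dt\,\widetilde{\Phi}^1 + \dt^{3/2}\,\Psi_\dt$, where $\Phi_0$ and $\widetilde{\Phi}^1$ solve the \emph{continuous} Poisson equations \eqref{eq:Poisson_continu} and \eqref{eq:Poisson_Phi_1} and are therefore smooth by elliptic regularity, while $\Psi_\dt$ is merely bounded in $L^\infty$. The Taylor expansion (Lemma~\ref{lem:cross_correlation}) is then applied only to the smooth functions $\Phi_0$ and $\widetilde{\Phi}^1$; the remainder contributes $\dt^{3/2}\cdot O(1)$ to the martingale increment, hence $O(\dt)$ after the normalization by $\sqrt{\dt}$, with no derivative of $\Psi_\dt$ ever needed. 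Note also that you must keep the first-order corrector $\widetilde{\Phi}^1$ (equivalently, expand $(\Id-P_\dt)/\dt$ to order $\dt$, including the operator $A$ and the rejection drift $\mathbb{E}_G[\xi_+(q,G)G]$): dropping it would leave an unidentified $O(\dt)$ piece inside a function you still need to differentiate, so the two-term expansion is not an optional refinement but the mechanism that confines every non-smooth object to the $O(\dt^{3/2})$ remainder.
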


The proof of this result can be read in Section~\ref{sec:Einstein}. In fact, a slight extension of our technique of proof would allow to show that the diffusively rescaled process generated by the Metropolized Euler-Maruyama scheme, namely $\varepsilon Q^{\lfloor t/(\dt \, \varepsilon^2) \rfloor}$ (where $\lfloor x \rfloor$ denote the unique integer such that $\lfloor x \rfloor \leq x < \lfloor x \rfloor+1$), weakly converges on finite time intervals to a Brownian motion with covariance matrix~$\mathscr{D}_\dt$.

An immediate corollary of Theorem~\ref{thm:Einstein} is the following a priori error estimate on the self-diffusion:
\begin{equation}
\label{eq:error_bounds_Einstein}
\mathcal{D} = \mathcal{D}^{\rm Einstein}_\dt + \dt \, \widetilde{\mathcal{D}}^{\rm Einstein}_\dt, 
\qquad
\mathcal{D}^{\rm Einstein}_\dt = \frac{1}{2dN} \mathrm{Tr}\left(\mathscr{D}^{\rm Einstein}_\dt\right),
\end{equation}
where $\widetilde{\mathcal{D}}^{\rm Einstein}_\dt$ is uniformly bounded for $\dt$ sufficiently small. Some more work would allow to prove that the subleading correction term is of order~$\dt^{3/2}$, as in the Green-Kubo case (see Remark~\ref{rem:more_work}).

\section{Numerical illustration}
\label{sec:numerics}

The aim of this section is to illustrate the errors bounds~\eqref{eq:error_bounds_GK} and~\eqref{eq:error_bounds_Einstein}. We perform long computations so that the statistical errors are negligible. Let us mention that numerical simulations illustrating timestep errors for velocity autocorrelation functions were already presented in~\cite{BE12}.

\subsection{A simple one-dimensional case}

We start by considering a simple one-dimensional example ($N=d=1$): a single particle in the unit torus~$\cM = \mathbb{T}$, with the periodic potential $V(q) = \cos(2\pi q)$, at $\beta = 1$. Computations are performed by approximating expectations by realizations over $M$ replicas evolving independently, denoted by $q^{m,n}$ with $1 \leq m \leq M$ and where $n$ still is the step index. Initial conditions are prepared incrementally over the replicas. More precisely, starting from $q^{1,0} = 0$, we obtain the initial condition $q^{m+1,0}$ for the replica number $m+1$ by evolving the initial condition $q^{m,0}$ over 10~steps of the Metropolized Euler-Maruyama scheme with step size $\dt_{\rm thm} = 0.01$. We have checked that the equilibrium distribution is very well reproduced by the empirical measure produced by $\{ q^{m,0} \}_{1 \leq m \leq M}$ provided $M$ is reasonably large (say, $M \geq 10^3$).

The self-diffusion coefficient $\mathcal{D}_\dt^{\rm Einstein}$ for the Einstein approach is approximated by fitting the unnormalized self-diffusion
\begin{equation}\label{eq:estimateur_diffusion}
D^M_n = \frac1M \sum_{m=1}^M \left( Q^{m,n} - Q^{m,0} \right)^2
\end{equation}
by a linear function $\mathcal{D}^{{\rm Einstein},M}_\dt \, n\dt$, the slope being the estimation of the self-diffusion for the time step under consideration. This is indeed confirmed by Figure~\ref{fig:diffusion} (Left), which presents the evolution of $D_n^M$ as a function of the physical time $n\dt$. The results produced in Figures~\ref{fig:diffusion} and~\ref{fig:error_diffusion} have been obtained with $M = 10^7$ replicas and $n_{\rm Einstein} = 3 \times 10^5$ steps. Let us also note that, in accordance with~\eqref{eq:estimate_R_lemma}, the rejection rate scales as $\dt^{3/2}$. 

A numerical approximation of~\eqref{eq:approx_GK} requires both a discretization using finitely many replicas, but also a truncation of the integration in time with an upper bound~$\tau$. We consider the following numerical estimation of the self-diffusion coefficient obtained with the Green-Kubo formula:  
\begin{equation}\label{eq:estimateur_GK}
\mathcal{D}^{{\rm GK},M,\tau}_\dt = 1 - \frac{\beta^2 \dt}{M} \sum_{m=1}^M \sum_{n=0}^{\lfloor \tau/\dt \rfloor} V'(q^{m,n}) V'(q^{m,0}).
\end{equation}
The correlation functions shown in Figure~\ref{fig:correlation} suggest that the autocorrelation of $V'$ is exponentially decreasing. It can be considered as negligible for times larger than~0.2. The numerical results reported in Figure~\ref{fig:correlation} and~\ref{fig:error_diffusion} have been obtained with $M = 2 \times 10^8$ replicas and a time cut-off $\tau = 0.3$.

The results presented in Figure~\ref{fig:error_diffusion} indeed confirm that, for small time steps~$\dt$, the error in the self-diffusion is of order~$\dt$ for both methods. For larger time steps, nonlinearities appear. Note also that the errors on the coefficients computed with the Green-Kubo formula are smaller in this simple case. In any case, in accordance with the statements of Theorem~\ref{thm:cv_faible_processus}, the self-diffusion is between~0 and~1 when $\dt \to 0$.

\begin{figure}
\begin{center}
\includegraphics[width=7cm]{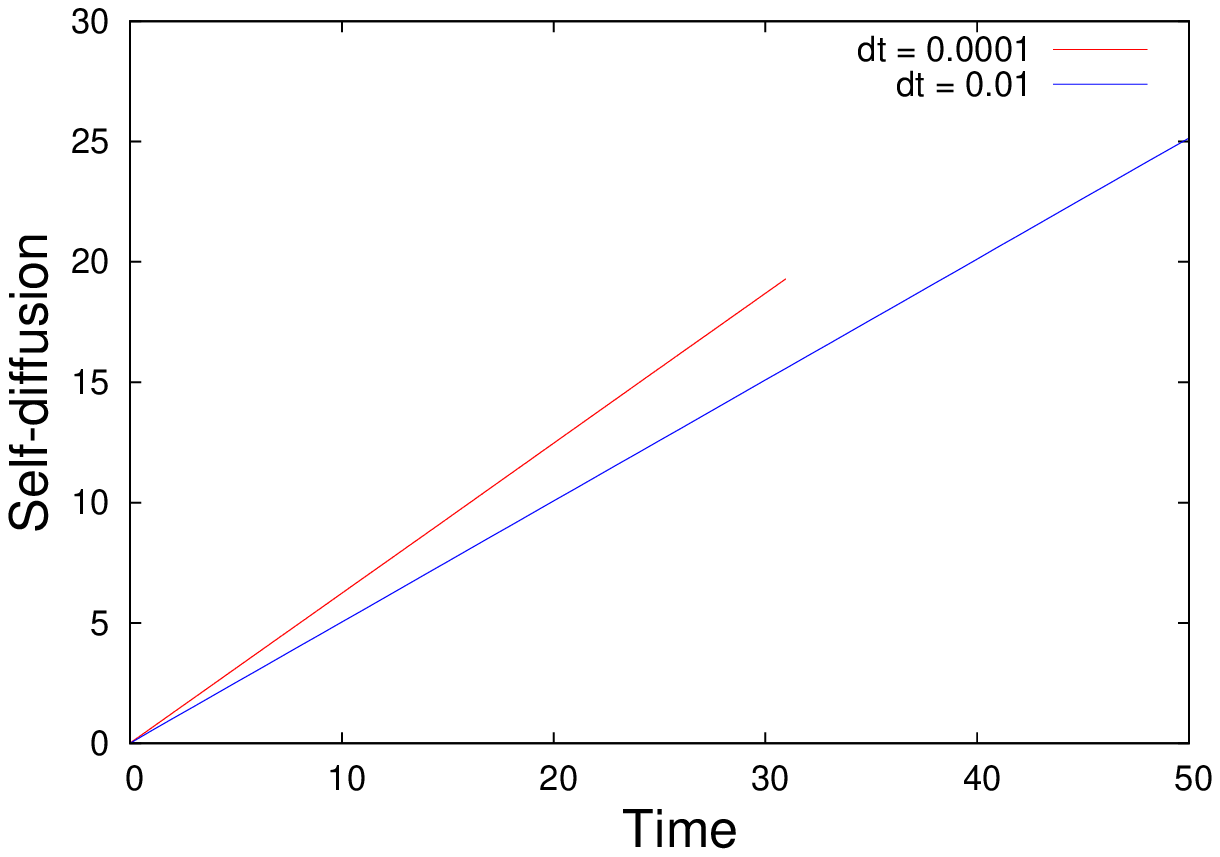}
\includegraphics[width=7cm]{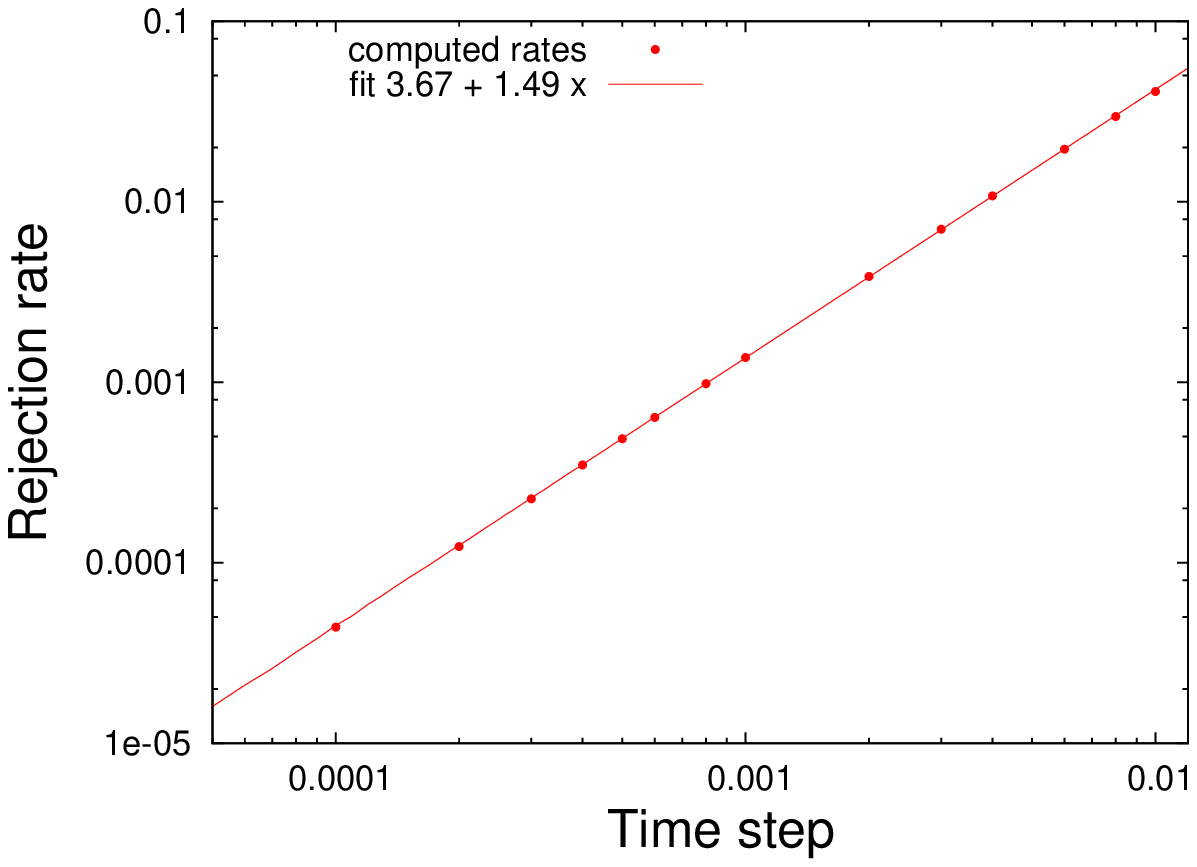}
\end{center}
\caption{\label{fig:diffusion} Left: Self-diffusion $D_n^M$ as a function of the physical time $n\dt$ for two values of the time step~$\dt$. Right: average rejection rate~\eqref{eq:avg_rejection_rate} as a function of the time step~$\dt$, in a log-log scale. As predicted by~\eqref{eq:estimate_R_lemma}, the rejection rate scales as $\dt^{3/2}$.}
\end{figure}

\begin{figure}
\begin{center}
\includegraphics[width=7cm]{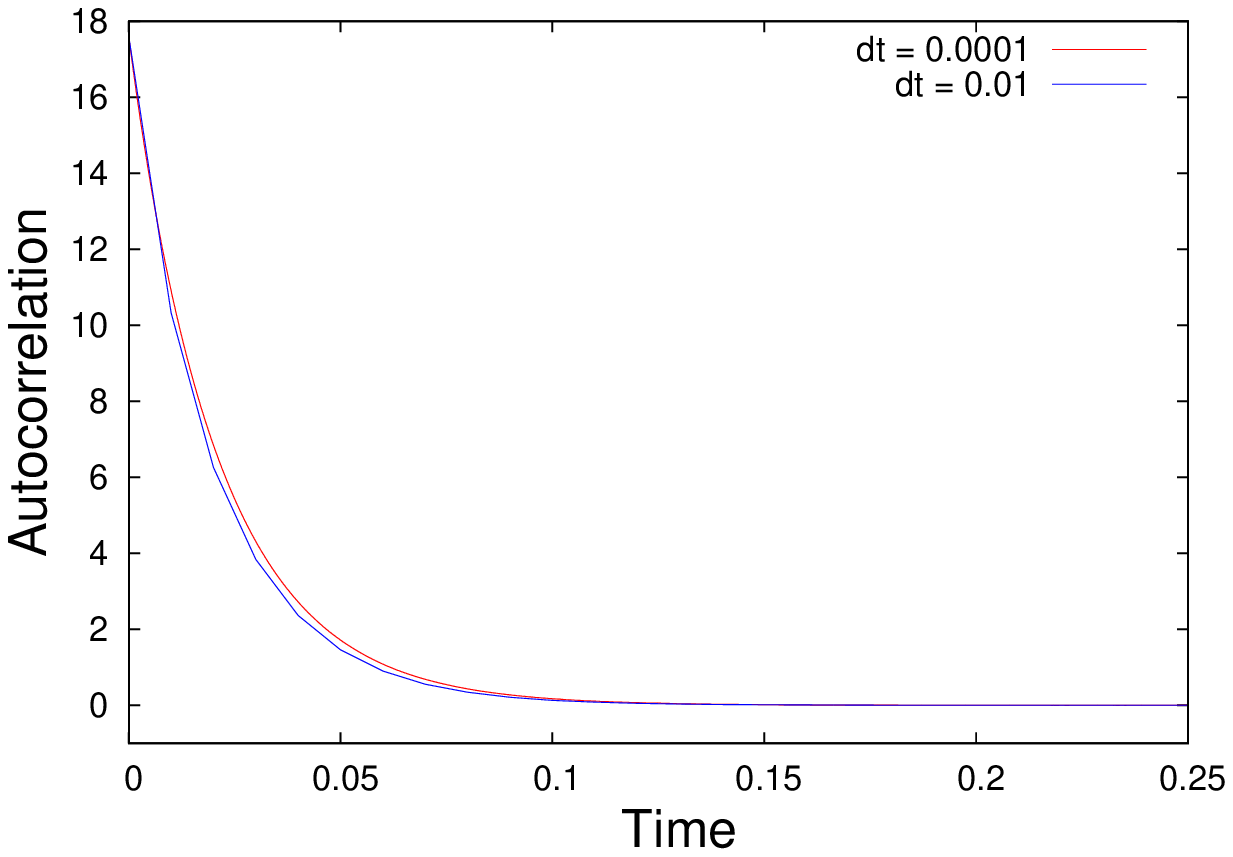}
\includegraphics[width=7cm]{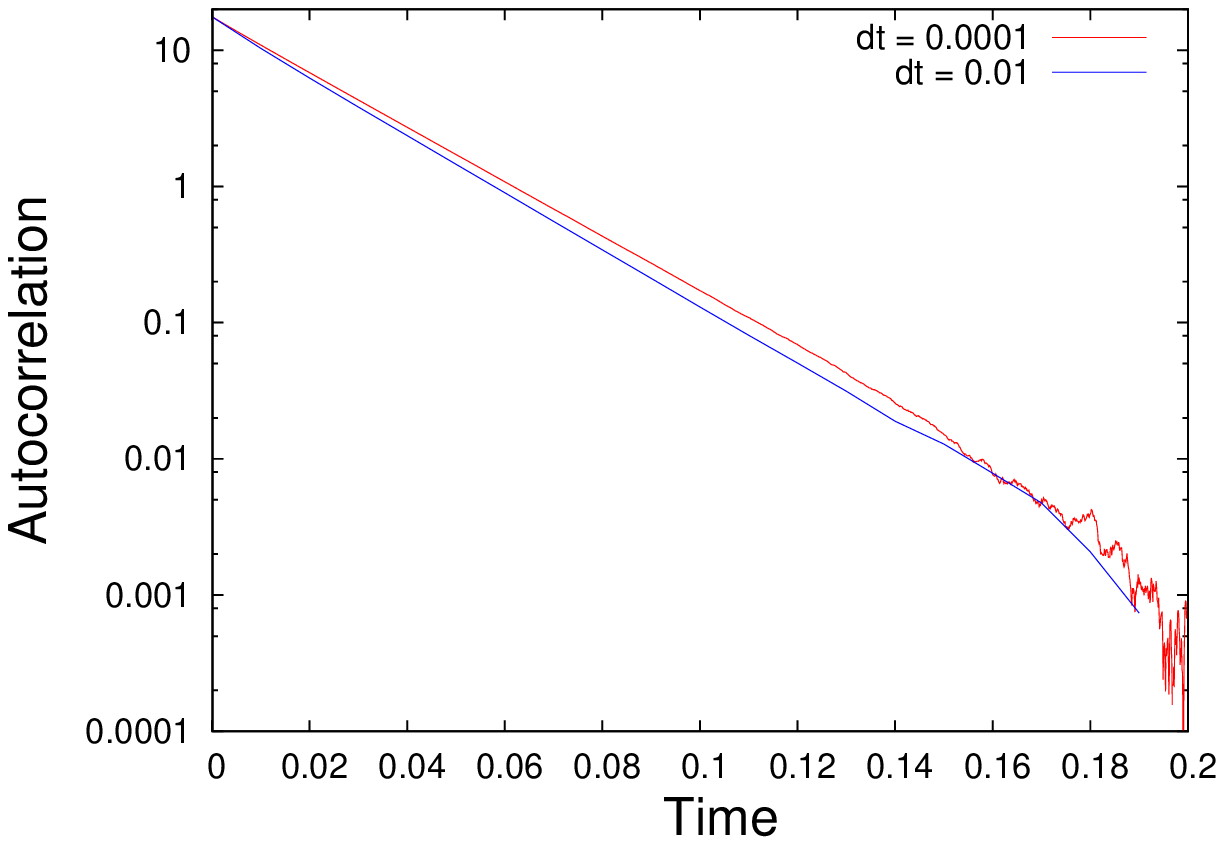}
\end{center}
\caption{\label{fig:correlation} Plot of the approximated correlation functions $\mathbb{E}(V'(q_t) V'(q_0))$. Left: standard view. Right: logarithmic scale on the ordinates.}
\end{figure}

\begin{figure}
\begin{center}
\includegraphics[width=7cm]{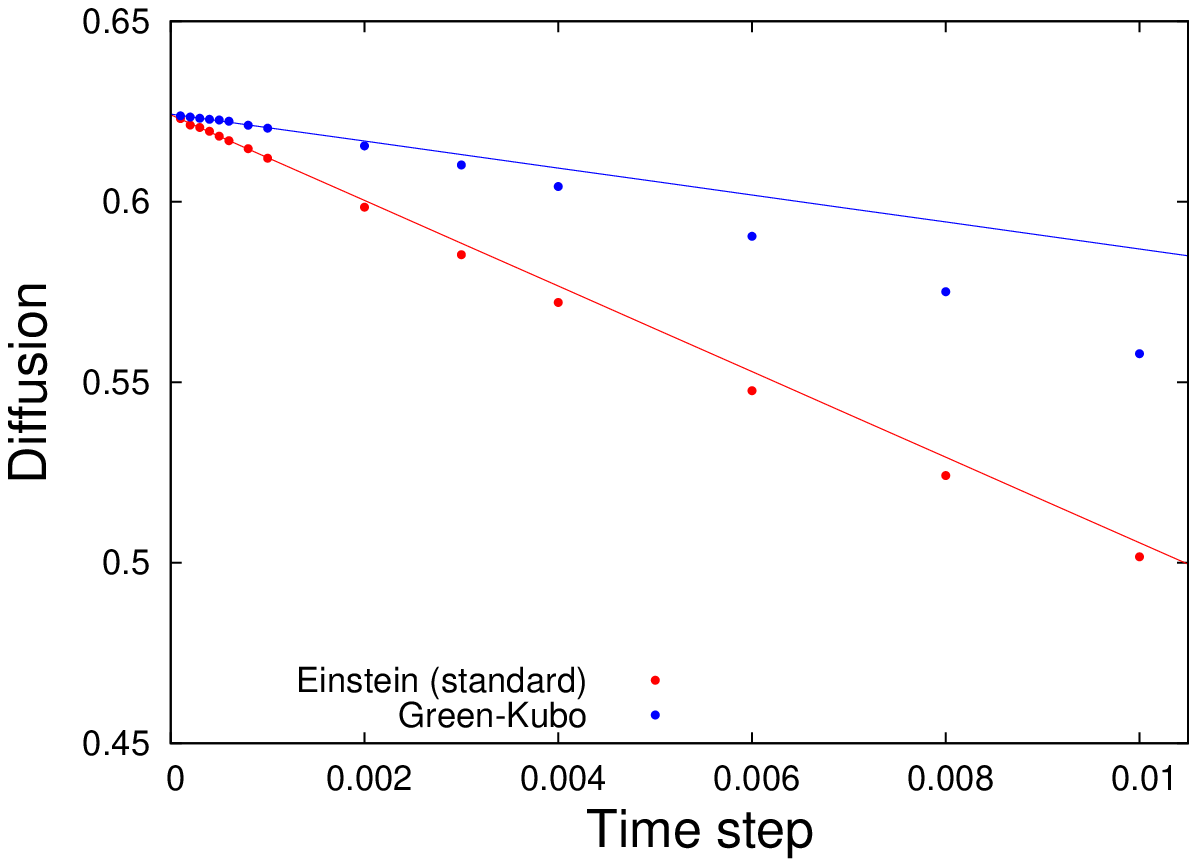}
\includegraphics[width=7cm]{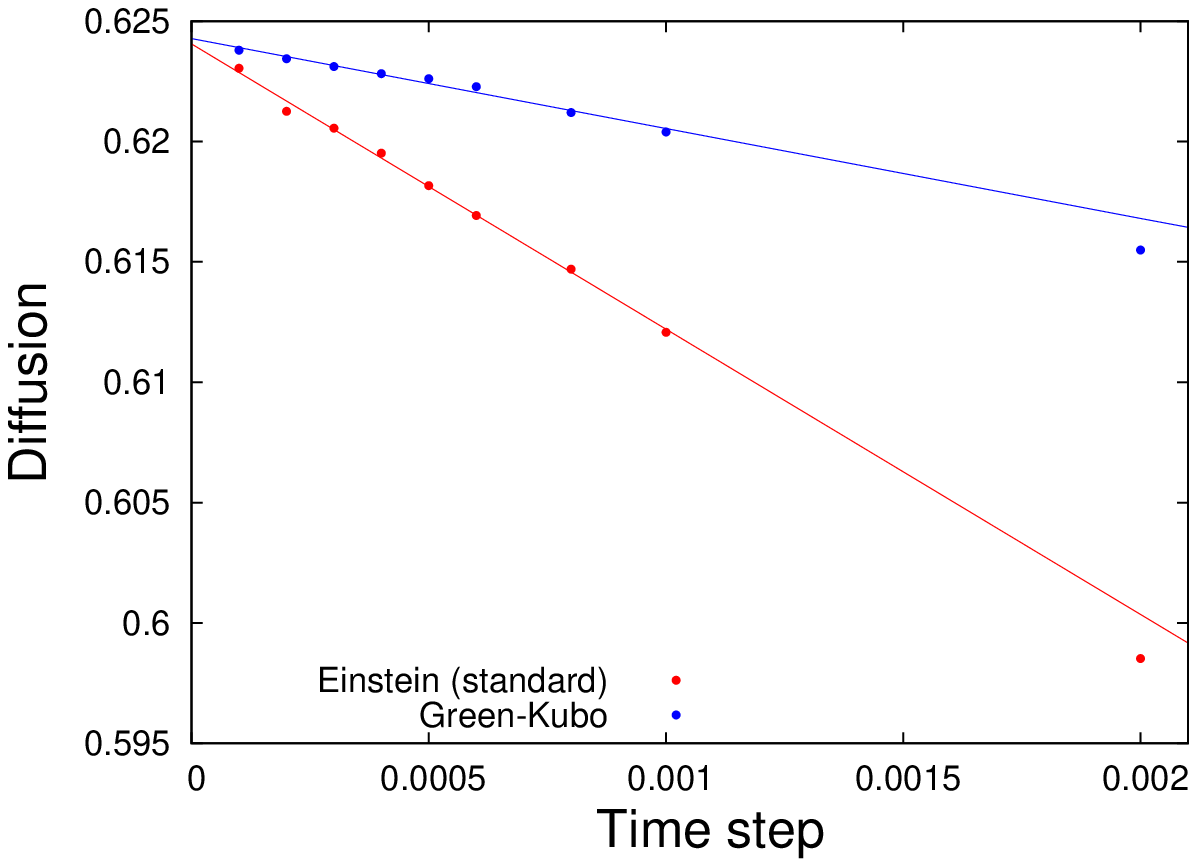}
\end{center}
\caption{\label{fig:error_diffusion} Diffusion constant as a function of the time step $\dt$ for the one-dimensional potential $V(q) = \cos(2\pi q)$ at $\beta = 1$, with a zoom on the smaller time steps on the right picture. Affine fits $\mathcal{D}_{\Delta} = \mathcal{D} + \Delta t \, \widetilde{D}_1$ consistent with~\eqref{eq:error_bounds_GK} and~\eqref{eq:error_bounds_Einstein} are in both cases represented by solid lines.}
\end{figure}

\subsection{The more realistic case of solvated ions}
\label{sec:num_real_case}

We consider in this section a more physically realistic system: a large, fixed ion interacting with smaller particles, typically smaller ions. The $N$ smaller particles evolve in a three-dimensional cubic simulation box of length~$L$ with periodic boundary conditions. The smaller particles have the same mass~$m$, and their positions are denoted by $q_i \in (L\mathbb{T})^3$. The potential energy functions are inspired by standard choices in the modeling of ionic solutions~\cite{Jardat_thesis}. The interaction between small particles is governed by an appropriately truncated Lennard-Jones potential:
\begin{equation}
\label{LJ pot}
v(r) = 4\eps\left( \left(\frac{\sig}r\right)^{12} - \left(\frac{\sig}r\right)^6 \right) - \eps_{\mathrm{shift}} - f_{\mathrm{spline}}(r-r_{\rm cut}), \qquad \mathrm{when} \ r \leq r_{\rm cut},
\end{equation}
and $v(r) = 0$ for $r \geq r_{\rm cut}$. The parameter $\eps > 0$ is some reference energy, while $\sigma > 0$ is some reference distance. The parameters $\eps_{\mathrm{shift}}$ and $f_{\mathrm{spline}}$ ensure that $v$ is a $C^1$ function. When $r_{\rm cut} \to +\infty$, the minimal energy of $v$ converges to~$-\eps$, a value obtained at a distance $r_{\rm min} = 2^{1/6} \sigma$.

We additionally consider a large ion, modeled as a fixed particle at position $q_{\rm ion}$ (the center of the simulation box), whose interaction with the solvent particles is described by an attractive Yukawa potential (screened Coulomb interaction) plus some repulsive potential preventing the small particles from coming too close to the ion. More precisely, for a solvent particle at position~$q$, the interaction reads $v_{\rm ion}(|q-q_{\rm ion}|)$, with
\begin{equation}
\label{pot ion-part}
v_{\rm ion}(r) = E_{\mathrm{min}} \left(1-\frac{1+\ka\sig}{24}\right)^{-1}\left( \frac {1+\ka\sig} {24}\left(\frac{\sigma}{r}\right)^{24} - \frac{\sig}{r} \rme^{-\kappa (r-\sig)} \right)- \eps_{\mathrm{shift}}^{\rm ion} - f_{\mathrm{spline}}^{\rm ion}(r-r_{\mathrm{cut}}^{\rm ion}), \qquad r \leq r_{\rm cut}^{\rm ion},
\end{equation}
and $v_{\rm ion}(r) = 0$ for $r \geq r_{\rm cut}^{\rm ion}$. The parameters $\eps_{\mathrm{shift}}^{\rm ion}$ and $f_{\mathrm{spline}}^{\rm ion}$ ensure as above that $v_{\rm ion}$ is~$C^1$. The energy $-E_{\mathrm{min}} \leq 0$ is the minimal value of the potential, obtained when $r=\sig$ (in the limit when $r_{\rm cut}^{\rm ion} \to +\infty$ and $f_{\rm shift}^{\rm ion} = 0$) while $\kappa$ is some inverse length. The total potential energy of the $N$ small particles finally reads
\[
V(q_1,\dots,q_N) = \sum_{1 \leq i < j \leq N} v(|q_i-q_j|) + \sum_{i=1}^N v_{\rm ion}(|q_i-q_{\rm ion}|).
\]
The potentials~$v$ and~$v_{\rm ion}$ are plotted in Figure~\ref{fig:potential_LJ}.

\begin{figure}
\begin{center}
\includegraphics[width=7cm]{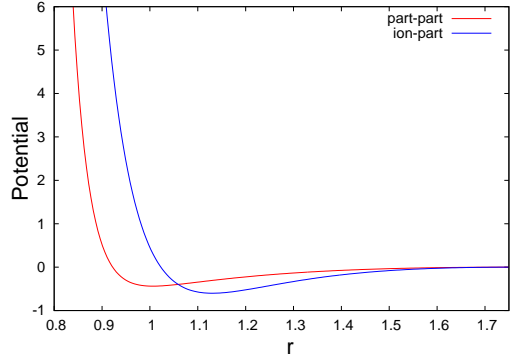}
\end{center}
\caption{\label{fig:potential_LJ} Plot of the particle-particle interaction~$v$ (red), and of the ion-particle interaction~$v_{\rm ion}$ (blue).}
\end{figure}

The results of this section are expressed in the reduced units obtained from the Lennard-Jones energy~$\eps$, the Lennard-Jones distance~$\sigma$ and the mass~$m$. In particular, the reference time is $t^* = \sigma \sqrt{m/\eps}$. Simulations were performed using the following parameters: $N=20$, solvent density $\rho = N/L^3 = 0.4$, $E_{\mathrm{min}} = 0.8347$, $\kappa = 1.7025$, inverse temperature $\beta = 1$, and $r_{\mathrm{cut}} = r_{\rm cut}^{\rm ion} = 1.76$. For this choice of parameters, we have observed that the simulations blow up for time-steps $\dt$ of the order of~$4\times10^{-4}$ when using the Euler-Maruyama (un-Metropolized) scheme~\eqref{eq:EM}; whereas the Metropolized Euler scheme~\eqref{eq:mEM} allows for much larger time steps. 

Expectations are approximated using $M$ trajectories of the system. We integrate trajectories one after the other, using the Metropolized Euler scheme~\eqref{eq:mEM}, with initial conditions for the $(m+1)$th trajectory obtained by taking the last configuration of the $m$th trajectory. The self-diffusion coefficient calculated with the Green-Kubo formula is obtained as in the previous section, using the estimator~\eqref{eq:estimateur_GK} (upon introducing the correct normalization factor $1/(3N)$ in the autocorrelation and replacing $V'$ by $\nabla V$). The values obtained by the Einstein formula are computed by dividing the unnormalized diffusion $D_n^M$ defined in~\eqref{eq:estimateur_diffusion} by the final time of the simulation: for a simulation time~$\tau$, 
\begin{equation}\label{eq:diffusion_coeff_Einstein}
D^{\mathrm{Einstein},M,\tau}_{\De t} = \frac1{6N\tau} D^M_{\lfloor \tau/\dt \rfloor}.
\end{equation}
The results presented in Figure \ref{fig:diffusion_LJ} show that the unnormalized mean squared displacement indeed grows linearly in time, as expected. Note also that, in accordance with~\eqref{eq:estimate_R_lemma}, the rejection rate scales as $\dt^{3/2}$. The results of Figure \ref{fig:diffusion_LJ} and \eqref{fig:error_diffusion_LJ} have been obtained with $M=10^5$ trajectories, with integrations performed up to $\tau=20$.

The results presented in Figure~\ref{fig:correlation_LJ} suggest that the decay of the force autocorrelation cannot be represented by a single exponential function. The force autocorrelation can be considered to be small in relative value for times of the order of~$0.1$. The numerical results reported in Figure~\ref{fig:correlation_LJ} and~\ref{fig:error_diffusion_LJ} were obtained by averaging $M=10^6$ trajectories with an integration time~$\tau=0.3$.

Error estimates for the diffusion coefficients are gathered in Figure~\ref{fig:error_diffusion_LJ}. For small time steps $\De t$, the error in the self-diffusion is linear in~$\De t$, while nonlinearities appear for larger time steps. The continuous lines are linear fits obtained over the values corresponding to the 10 smallest time steps. As in the simple example discussed in the previous section, estimates obtained with Green-Kubo's formula seem more reliable than those obtained with Einstein's formula. Note also that, in accordance with Theorem \ref{thm:cv_faible_processus}, the self-diffusion is between 0 and 1 in all cases for sufficiently small time-steps.

\begin{figure}
\begin{center}
\includegraphics[width=7cm]{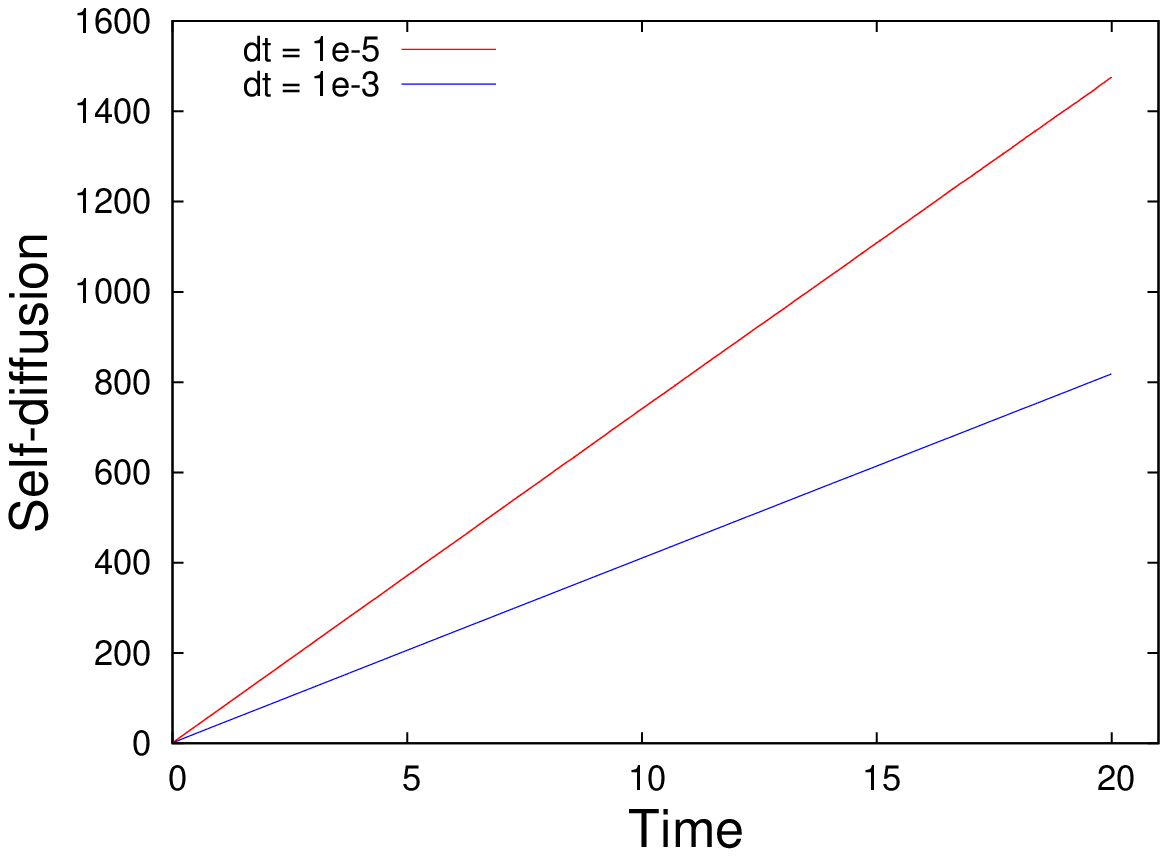}
\includegraphics[width=7cm]{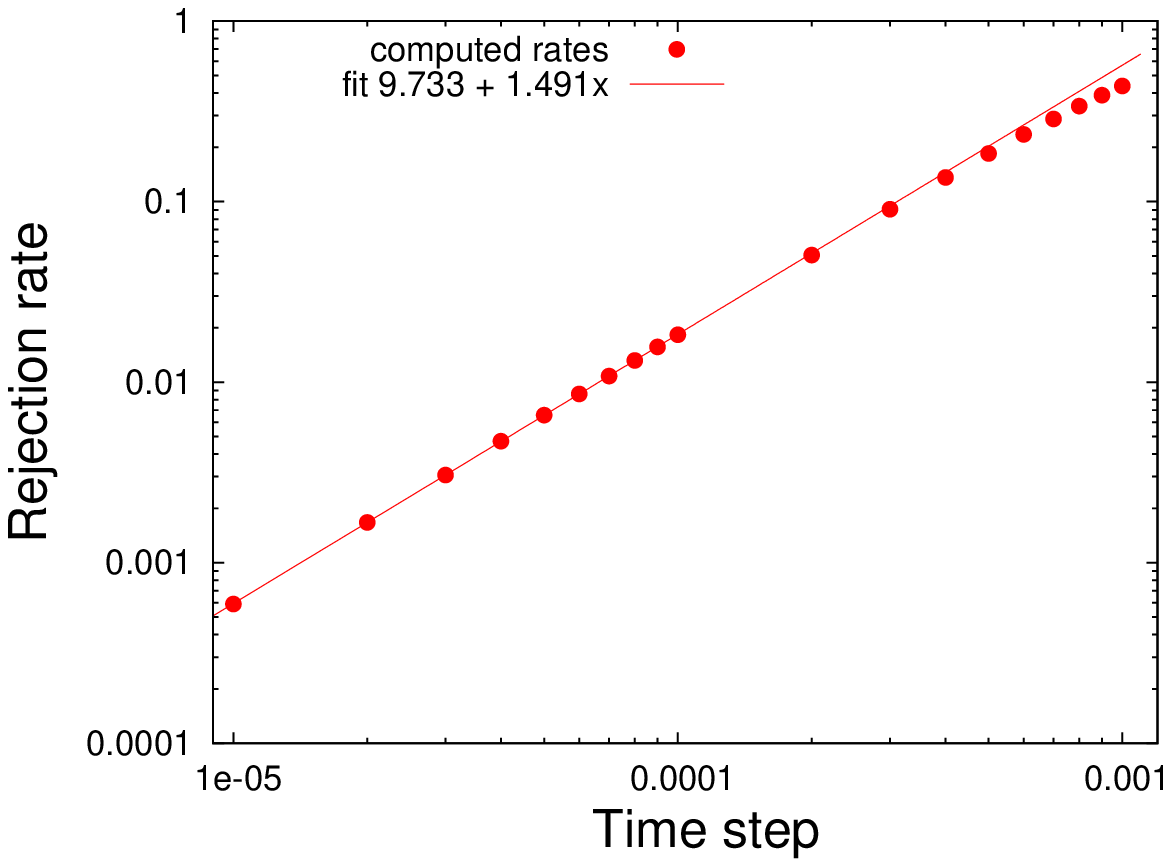}
\end{center}
\caption{\label{fig:diffusion_LJ} Left: Self-diffusion $D_n^M$ as a function of the physical time $n\dt$ for two values of the time step~$\dt$. Right: average rejection as a function of the time step~$\dt$, in a log-log scale. As predicted by~\eqref{eq:estimate_R_lemma}, the rejection rate scales as $\dt^{3/2}$.}
\end{figure}

\begin{figure}
\begin{center}
\includegraphics[width=7cm]{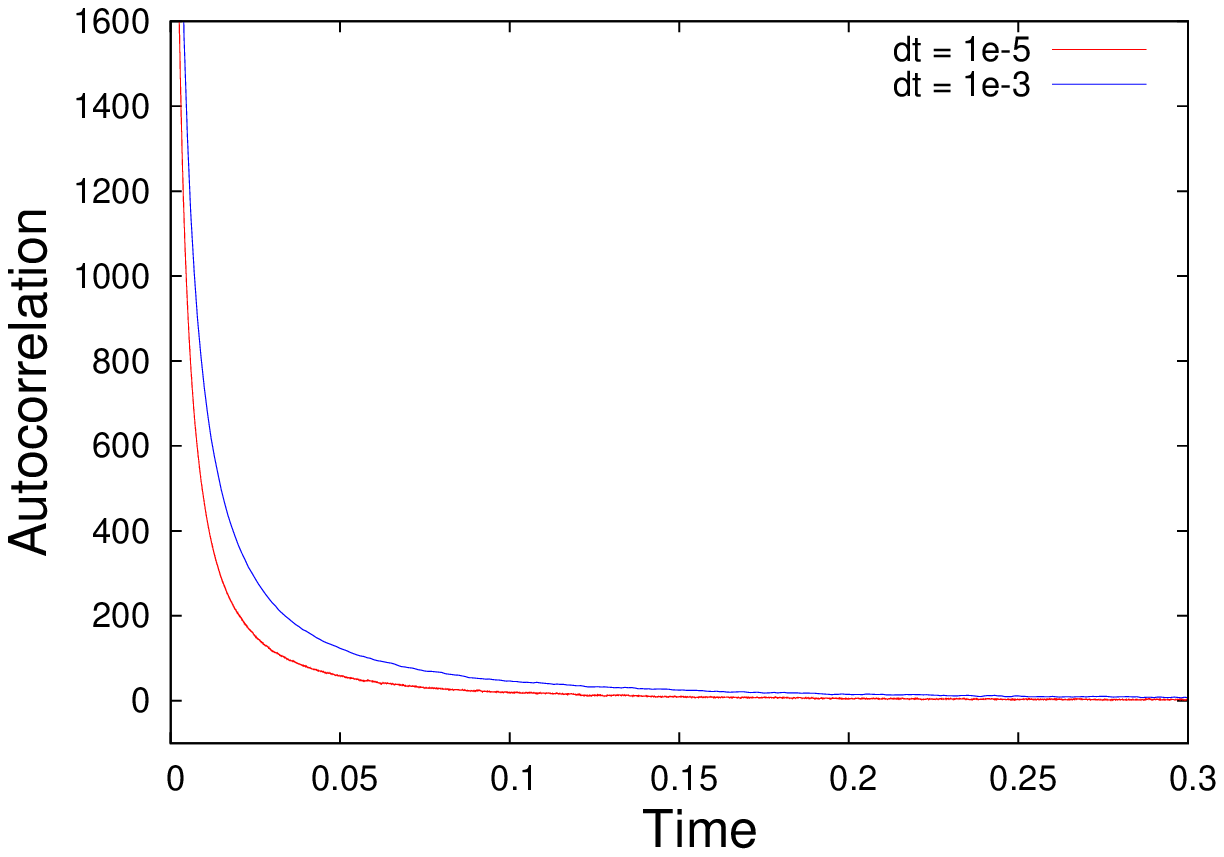}
\includegraphics[width=7cm]{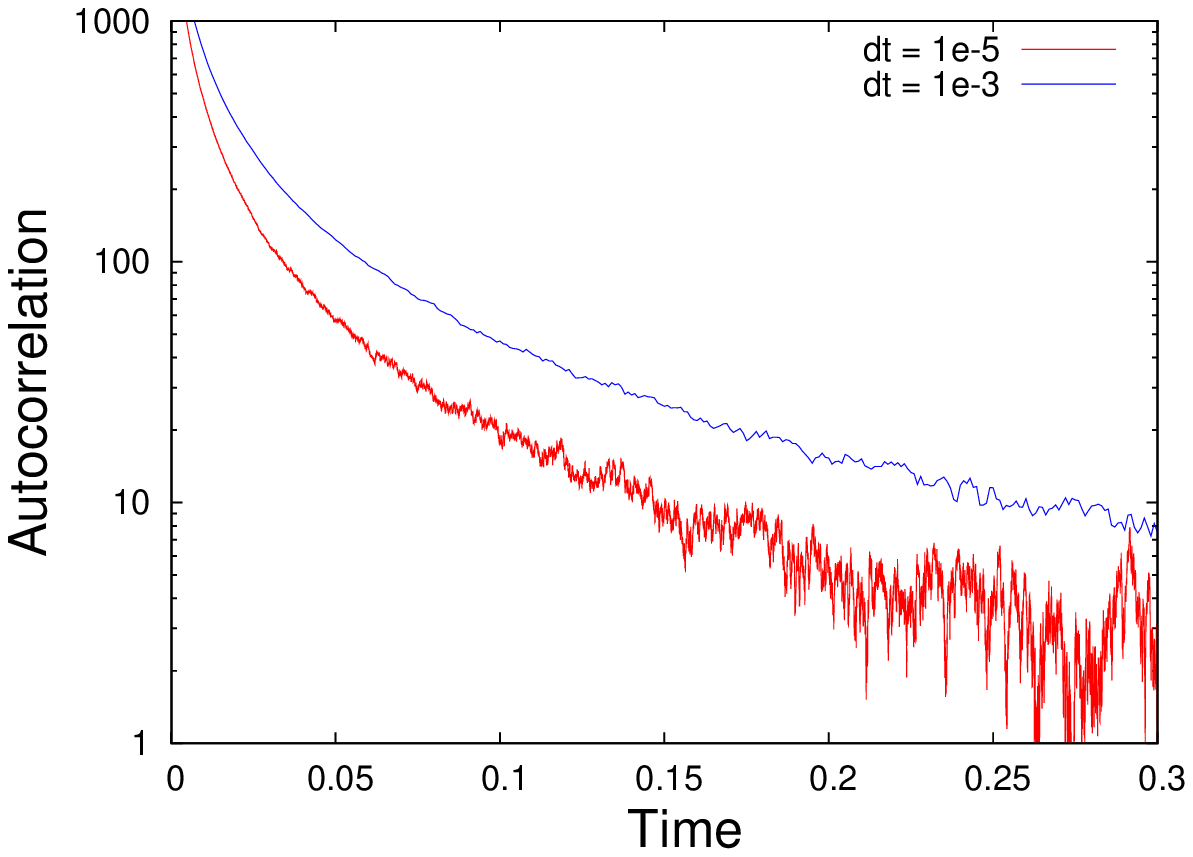}
\end{center}
\caption{\label{fig:correlation_LJ} Plot of the approximated correlation functions $\mathbb{E}(\nabla V(q_t)^T \nabla V(q_0))$. Left: standard view. Right: logarithmic scale on the ordinates.}
\end{figure}

\begin{figure}
\begin{center}
\includegraphics[width=7cm]{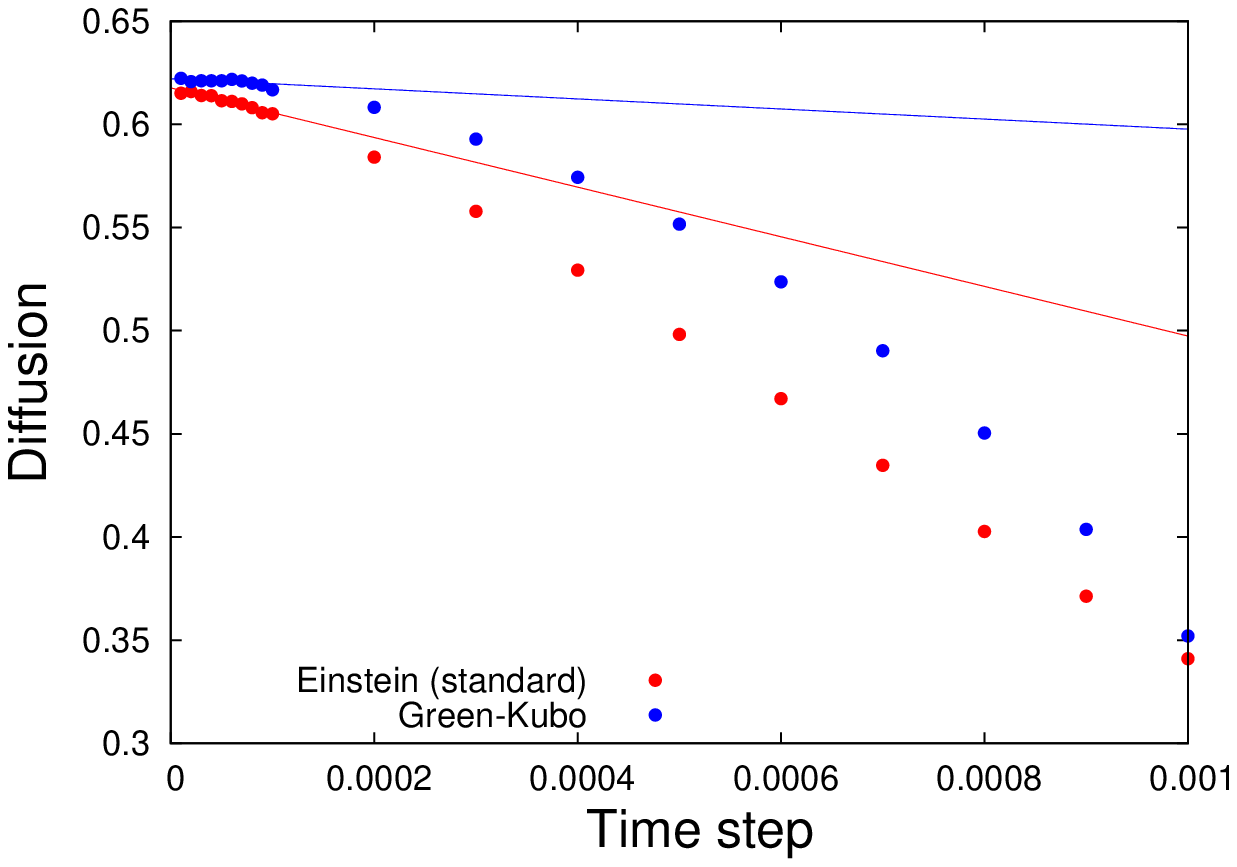}
\includegraphics[width=7cm]{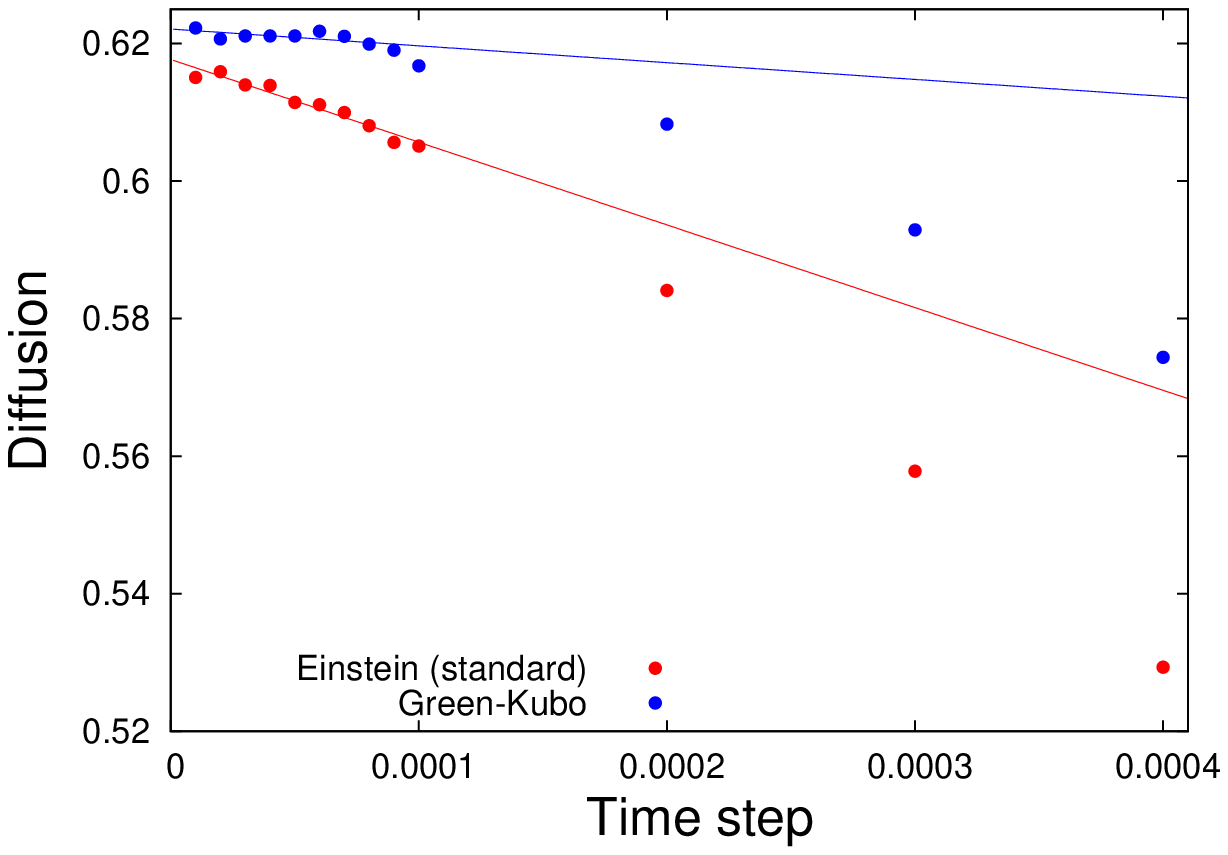}
\end{center}
\caption{\label{fig:error_diffusion_LJ} Diffusion constant as a function of the time step $\dt$ for the solvated ion system at $\beta = 1$, with a zoom on the smaller time steps on the right picture. Affine fits $\mathcal{D}_{\Delta} = \mathcal{D} + \Delta t \, \widetilde{D}_1$ consistent with~\eqref{eq:error_bounds_GK} and~\eqref{eq:error_bounds_Einstein} are in both cases represented by solid lines.}
\end{figure}

\section{Possible work tracks to reduce the error on the estimation of the self-diffusion}
\label{sec:tracks}

Both the Green-Kubo and the  Einstein approaches lead to discretization errors of order~$\dt$, as proved theoretically and verified numerically. A natural question is how to reduce this error. In the chemistry literature, it was proposed in~\cite{HB98} to renormalize the time in Einstein's method by replacing the simulation time~$n\dt$ appearing in the denominator of~\eqref{eq:def_D_Einstein_dt} by some effective time $\theta_\dt n \dt$, where $\theta_\dt$ is the average acceptance rate of the Metropolis algorithm for a given time step~$\dt$. However, since the average acceptance rate is of order $1-C\dt^{3/2}$ for small time steps (see~\eqref{eq:estimate_R_lemma}), such a correction cannot possibly cancel out the $\dt$-error in the diffusion coefficient.

A more promising work track, which we started working on at the end of our stay at CIRM, is to modify the proposed move (instead of simply considering the Euler-Maruyama scheme~\eqref{eq:EM}), and possibly the invariant measure as well, in order to increase the weak and strong orders of the associated Metropolized scheme. Some steps in this direction have already been pursued in~\cite{BDE13}. In fact, the proofs of Theorem~\ref{thm:GK} and~\ref{thm:Einstein} show that, in order to gain accuracy on the computation of transport coefficients, it is sufficient to find a numerical scheme such that
\[
P_\dt \psi = \left( \Id + \dt \, \cL + \frac{\dt^2}{2} \, \cL^2 \right) \psi + \dt^{5/2} r_{\psi,\dt}. 
\]
A key element to obtain such equalities is to decrease the rejection rate for small time steps. 

\section{Proof of the results}
\label{sec:proofs}

In all the proofs, the constants $C > 0$ and the critical time steps $\dt^*$ may change from line to line. Upon changing~$V$ into~$\beta V$, we may also assume that $\beta = 1$. 

\subsection{Proof of Theorem~\ref{thm:cv_faible_processus}}
\label{sec:cv_faible_processus}

The idea is to rewrite the part of the additive functional involving $- \nabla V(q_t)$ as an approximate martingale. To this end, we introduce the solutions $\Phi_0 = (\Phi_{0,1},..,\Phi_{0,dN})$ of the following Poisson equations 
\begin{equation}
\label{eq:Poisson_continu}
\mathcal{L}\Phi_{0,j} = -\nabla_{q_j} V.
\end{equation}
In view of the resolvent estimate~\eqref{eq:resolvent_estimate}, the functions $\Phi_{0,j}$ are well defined elements of $\widetilde{L}^2(\mu)$ since
\[
\int_\cM \nabla V \, d\mu = \frac1Z \int_\cM \nabla V \, \rme^{-\beta V} = -\frac1Z \int_\cM \nabla \left( \rme^{-\beta V} \right) = 0.
\]
In addition, by elliptic regularity, the functions $\Phi_{0,j}$ are smooth. By Ito's lemma, we therefore obtain
\[
d\Phi_{0,j}(q_t) = \cL \Phi_{0,j}(q_t)dt + \sqrt{2}\nabla \Phi_{0,j}(q_t) \cdot dW_t,
\]
so that the integrated displacement from the origin can be rewritten as
\begin{equation}
\label{eq:rewriting_increment_continuous}
Q_t - Q_0 = -\int_0^t \nabla V(q_s) \, ds  + \sqrt{2} \, W_t
= \Phi_0(q_t) - \Phi_0(q_0) +\sqrt{2}\int_0^t \Big(\mathrm{Id} - \nabla \Phi_0(q_s)\Big) dW_s.
\end{equation}
Since $\Phi_0$ is bounded, $(\Phi_0(X_t) - \Phi_0(X_0))/\sqrt{t}$ uniformly vanishes as $t$ goes to infinity. The long time behavior of the process $F^T (Q^\eps_t-Q^\eps_0)/\sqrt{t}$ (for a given direction $F \in \R^{dN})$ is therefore determined by the martingale 
\[
\mathscr{M}^{F,\eps}_t = \eps\sqrt{\frac2t} \int_0^{t/\eps^2} \Big(F - \nabla \left(F^T\Phi_0(q_s)\right)\Big) \cdot dW_s,
\]
whose quadratic variation is
\[
\left\langle \mathscr{M}^{F,\eps} \right\rangle_t = \frac{2\eps^2}{t}\int_0^{t/\eps^2} |F|^2 - 2 \nabla \left( F^T \Phi_0(q_s)\right) \cdot F + \left| \nabla \left(F^T \Phi_0(q_s)\right)\right|^2 \, ds.
\]
The ergodic properties of the diffusion process~$q_t$ allow to prove that
\begin{align}
F^T \mathscr{D} F = \lim_{t\to+\infty} \left\langle \mathscr{M}^{F,\eps} \right\rangle_t & = 2\int_\cM |F|^2 - 2 \nabla \left( F^T \Phi_0\right) \cdot F + \left| \nabla \left(F^T \Phi_0\right)\right|^2 \, d\mu \nonumber\\
& =2 \int_\cM |F|^2 - 2  \left( F^T \Phi_0\right) \left(F^T \nabla V\right) +  \cL\left[ \left(F^T \Phi_0\right)\right]\left(F^T \Phi_0\right)  \, d\mu \nonumber\\
& =2 \int_\cM |F|^2 -  \left( F^T \Phi_0\right) \left(F^T \nabla V\right) \, d\mu 
=2F^T \left( \Id + \int_\cM \cL^{-1} \left(\nabla V\right) \otimes \nabla V \, d\mu\right)F \label{eq:covariance}
\end{align}
where, to pass from the first to the second line, we have used an integration by parts to transform the second term in the integral and~\eqref{eq:quadratic_form} for the third one, while the two last equalities follow from the definition of~$\Phi_0$. At this stage, we note that $F^T \mathscr{D} F = 0$ implies that the integrand of the first equality vanishes almost everywhere, which, by a Cauchy-Schwarz inequality, in turn implies that $\nabla \left(F^T \Phi_0\right)$ is proportional to~$F$. This is however impossible since $F$ is not the gradient of a periodic function, and we therefore conclude that $\mathscr{D}$ is positive definite. In addition, the last expression shows that $\mathscr{D} \leq 2\Id$ since $-\cL^{-1}$ is a positive operator (replace $\varphi$ by $\cL^{-1} \phi$ in~\eqref{eq:spectral_gap}).

Since $-\mathcal{L}$ has a positive spectral gap on $L^2(\mu)$ (see~\eqref{eq:spectral_gap}), we can write the following operator equality on~$\widetilde{L}^2(\mu)$~:
\[
\mathcal{L}^{-1} = -\int_0^{+\infty} \rme^{t\mathcal{L}} \, dt.
\]
Therefore, for general functions $\psi,\varphi$ with vanishing average with respect to~$\mu$,
\begin{equation}
\label{eq:integrated_autocovariance}
\int_0^{+\infty} \mathbb{E}\Big[\psi(q_t)\,\varphi(q_0)\Big]dt = \int_\cM \left(-\mathcal{L}^{-1}\psi\right)\varphi \, d\mu.
\end{equation}
Combining this result with~\eqref{eq:covariance} leads to the expression~\eqref{eq:def_Einstein_continuous} of the diffusion matrix~$\mathscr{D}$. To prove the convergence of the processes, two arguments should be made precise (see~\cite{Olla} for an elementary account): 
\begin{enumerate}[(i)]
\item the convergence of the finite-dimensional laws, which can be obtained very simply here by considering the exponential martingales
\[
\exp\left[\mathrm{i}\theta \left(\mathscr{M}_t^{F,\eps}-\mathscr{M}_s^{F,\eps}\right) + \frac{\theta^2}{2}\Big(\left\langle \mathscr{M}^{F,\eps} \right\rangle_t-\left\langle \mathscr{M}^{F,\eps} \right\rangle_s\Big)\right],
\]
which are such that the conditional expectations converge to those of a Brownian motion as $\eps \to 0$:
\[
\lim_{\eps \to 0} \mathbb{E}\left( \left. \exp\left[\mathrm{i}\theta \left(\mathscr{M}_t^{F,\eps}-\mathscr{M}_s^{F,\eps}\right) + \frac{\theta^2}{2}\Big(\left\langle \mathscr{M}^{F,\eps} \right\rangle_t-\left\langle \mathscr{M}^{F,\eps} \right\rangle_s\Big)\right] \right| \, \mathcal{F}_{s/\eps^2} \right) = \exp\left(-\frac{\theta^2}{2} (t-s)F^T\mathscr{D}F\right),
\]
$\mathcal{F}_{s/\eps^2}$ denoting the filtration of events until the time $s/\eps^2$. Finite-dimensional laws are then obtained by a simple induction, as made precise in~\cite{BLP,Olla} for instance.
\item the tightness of the process, proved using Prohorov's criterion (see for instance~\cite{Billingsley99}): 
\[
\forall \alpha, \tau > 0, \qquad \lim_{\delta \to 0} \limsup_{\eps \to 0} \mathbb{P}\left( \sup_{\substack{ |t-s|<\delta \\ 0 \leq s < t \leq \tau}} \left| F^T \Big( Q_{t}^{\eps} - Q_s^{\eps} \Big)\right| \geq \alpha \right) = 0. 
\]
This criterion is satisfied in view of the tightness of the martingale $\mathscr{M}^{F,\eps}_t$, itself easily obtained using Doob's inequality (see~\cite{Olla}).
\end{enumerate}

\subsection{Proof of Lemma~\ref{lem:expansion_Pdt}}
\label{sec:proof_expansion_dt}

We first determine the magnitude of the acceptance probability in the Metropolis algorithm, which reads
\[
R_\dt(q^n,\widetilde{q}^{n+1}) = \min\left(1,\rme^{-\alpha_\dt(q^n,\widetilde{q}^{n+1})}\right), 
\]
with
\[
\begin{aligned}
\alpha_\dt(q,q') & = V(q') - V(q) + \frac{1}{4\dt}\left[ \left(q-q'+\dt \nabla V(q')\right)^2 - \left(q'-q+\dt \nabla V(q)\right)^2 \right] \\
& = V(q') - V(q) - \frac12 \left\langle q'-q,\nabla V(q')+\nabla V(q)\right\rangle + \frac{\dt}{4}\left( |\nabla V(q')|^2 - |\nabla V(q)|^2 \right).
\end{aligned}
\]
Using the following expansions (with integral remainders)
\begin{align}
V(q') - V(q) & = \left\langle \nabla V(q), q'-q\right\rangle + \frac12 (q'-q)^T \left[ \nabla^2 V(q) \right] (q'-q) + \frac16 D^3 V(q)\Big((q'-q)^{\otimes 3}\Big) \nonumber \\
& \ \ \ + \frac16 \int_0^1 (1-t)^3 D^4 V\big((1-t)q+tq'\big)\Big((q'-q)^{\otimes 4}\Big) \, dt, \label{eq:Taylor_3} \\
\nabla V(q') & = \nabla V(q) + \nabla^2 V(q) (q'-q) + \frac12 D^3 V(q)\Big((q'-q)^{\otimes 2}\Big) \nonumber \\
& \ \ \ + \frac12 \int_0^1 (1-t)^2 D^3 V\big((1-t)q+tq'\big)\Big((q'-q)^{\otimes 3}\Big) \, dt, \nonumber
\end{align}
a simple computation shows that
\begin{equation}
\label{eq:alpha_dt}
\alpha_\dt(q^n,\widetilde{q}^{n+1}) = \dt^{3/2} \xi\left(q^n,G^n\right) + \dt^2 \widetilde{\xi}_\dt(q^n,G^n), 
\end{equation}
where 
\begin{equation}
\label{eq:def_xi}
\xi(q,G) = -\frac{\sqrt{2}}{6} D^3 V(q) \Big( G^{\otimes 3} \Big) + \frac{\sqrt{2}}{2} \nabla V(q)^T \nabla^2 V(q) G,
\end{equation}
while there exists a constant $C > 0$ such that $\left| \widetilde{\xi}_\dt(q,G) \right| \leq C(1 + |G|^6)$ for any $0 \leq \dt \leq 1$. We next use the inequality
\[
x_+ - \frac{x_+^2}{2} \leq 1 - \min(1,\rme^{-x}) \leq x_+, \qquad x_+ = \max(0,x),
\]
obtained by distinguishing the cases $x \leq 0$ and $x \geq 0$. This shows that 
\begin{equation}
\label{eq:estimate_R}
R_\dt(q^n,\widetilde{q}^{n+1}) = 1 - \dt^{3/2} \xi_+\left(q^n,G^n\right) + \dt^2 \widehat{\xi}_\dt(q^n,G^n), 
\end{equation}
with $\xi_+(q,G) = \max(0,\xi(q,G))$ and where $\left|\widehat{\xi}_\dt(q,G)\right| \leq C(1+|G|^{12})$. The estimate on the average acceptance rate~\eqref{eq:estimate_R_lemma} is obtained by taking the expectation over all possible realizations of~$G^n$, with the definition
\[
\overline{\xi}(q) = \mathbb{E}_G \Big( \xi_+(q,G) \Big).
\]

To obtain the action of the operator~$A$, we start from the expression of the Metropolis transition operator (see for instance~\cite[Section~2.1.2]{LRS10})
\[
P_\dt\psi(q) = \int_\cM R_\dt(q,q') T_\dt(q,q') \psi(q') \, dq' + \left(1-\int_\cM R_\dt(q,q') T_\dt(q,q') \, dq'\right)\psi(q), 
\]
which can be reformulated as
\[
P_\dt\psi(q) - \psi(q) = \int_\cM R_\dt(q,q') T_\dt(q,q') \Big( \psi(q') - \psi(q) \Big)\, dq'. 
\]
We now write $q' = q - \dt \, \nabla V(q) + \sqrt{2\dt} \, g$, so that, using~\eqref{eq:estimate_R} to estimate the rejection rate,
\[
\begin{aligned}
& P_\dt\psi(q) - \psi(q) \\
& = \int_{\R^{dN}} R_\dt\left(q,q- \dt \, \nabla V(q) + \sqrt{2\dt} \, g\right) \Big( \psi\left(q- \dt \, \nabla V(q) + \sqrt{2\dt} \, g\right) - \psi(q) \Big)\, \frac{\rme^{-g^2/2}}{(2\pi)^{dN/2}} \, dg \\
& = \int_{\R^{dN}} \Big( \psi\left(q- \dt \, \nabla V(q) + \sqrt{2\dt} \, g\right) - \psi(q) \Big)\, \frac{\rme^{-g^2/2}}{(2\pi)^{dN/2}} \, dg \\
& + \int_{\R^{dN}} \left[ R_\dt\left(q,q- \dt \, \nabla V(q) + \sqrt{2\dt} \, g\right) -1 \right] \Big( \psi\left(q- \dt \, \nabla V(q) + \sqrt{2\dt} \, g\right) - \psi(q) \Big)\, \frac{\rme^{-g^2/2}}{(2\pi)^{dN/2}} \, dg \\
& = \dt\, (\cL\psi)(q) + \frac{\dt^2}{2} \left(\left[\cL^2 + D_1 + D_2 \right]\psi\right)(q) + \dt^{5/2} r_{\psi,\dt},
\end{aligned}
\]
where we have used for the first integral a Taylor expansion at fourth order similar to~\eqref{eq:Taylor_3} to obtain (see the computations in~\cite[Section~4.9]{LMS13})
\[
D_1\psi = 2\nabla^2 V : \nabla^2\psi + \nabla(\Delta V)\cdot \nabla\psi - \nabla V^T (\nabla^2 V) \nabla\psi\,;
\]
and a Taylor expansion at first order for the term involving the rejection rate to obtain
\[
D_2\psi = -\sqrt{2} \left(\int_{\R^{dN}} \xi_+(q,g) \, g \, \frac{\rme^{-g^2/2}}{(2\pi)^{dN/2}} \, dg\right)^T \nabla \psi.
\]
The remainder $r_{\psi,\dt}$ is uniformly bounded in $L^\infty(\cM)$ for $\dt$ sufficiently small. The conclusion follows by setting $A = (\cL^2 + D_1 + D_2)/2$.

Finally, the invariance of~$\mu$ by~$P_\dt$ implies
\[
\forall \dt > 0, \qquad \int_\cM P_\dt \psi \, d\mu = \int_\cM \psi \, d\mu.
\]
This equality, together with the expansion~\eqref{eq:expansion_P_dt} proves~\eqref{eq:vanishing_A}.

\subsection{Proof of Lemma \ref{lem:geom_ergod}}
\label{sec:proof_lem_geom_ergod}

The proof below is a simplification of the argument presented in~\cite{BH13}, made possible since we work on a compact state space. The idea of the proof is to compare the Metropolis dynamics to the continuous dynamics, using the standard, un-Metropolized Euler-Maruyama scheme as an intermediate. Alternatively, it would be possible to directly compare the Metropolized and un-Metropolized schemes, by proving as in~\cite[Section~4.2]{LMS13} that the standard, un-Metropolized Euler-Maruyama scheme is geometrically ergodic since it satisfies a minorization condition.

To prove the first part of the Lemma, it is enough to show that there exists $\rho < 1$ such that, for any $0 < \dt \leq \dt^*$ and any $q \in \T^{dN}$,
\[
\left\| P_\dt^{n\lfloor 1/\dt \rfloor}(q, \cdot) - \mu \right\|_{\rm TV} \leq C\rho^n.
\]
Since we work on a compact state space, it is enough to show by Harris' theorem (see the presentation in~\cite{HM11,BH13}) that there exist $\alpha \in (0,1)$ such that
\begin{equation} 
  \label{ineq_harris}
  \left\| P_\dt^{\lfloor 1/\dt \rfloor}(q, \cdot) - P_\dt^{\lfloor 1/\dt \rfloor}(q', \cdot)\right\|_{\rm TV} \leq 2(1 - \alpha),
\end{equation}
uniformly in $0 < \dt \leq \dt^*$ and $(q,q') \in \cM^2$. We now introduce the transition kernel $Q_t$ of the continuous dynamics~\eqref{eq:dynamics}, defined as
\[
Q_t \varphi(q) = \mathbb{E}(\varphi(q_t) \, | \, q_0 = q) = \left(\rme^{t \cL} \varphi\right)(q) = \int_\cM Q_t(q,q') \varphi(q') \, dq',
\]
and consider it at time $t=1$. The transition kernel is well defined and regular since the generator is elliptic. By the triangle inequality,
\begin{equation}
\label{eq:tri_ineq_transitions}
\begin{aligned}
\sup_{(q,q')\in\cM^2} \left\|P_\dt^{\lfloor 1/\dt \rfloor}(q, \cdot) - P_\dt^{\lfloor 1/\dt \rfloor}(q', \cdot)\right\|_{\rm TV} & 
\leq \sup_{(q,q')\in\cM^2} \left\| Q_1(q, \cdot) - Q_1(q', \cdot)\right\|_{\rm TV} \\
& \ \ \ + 2 \, \sup_{q\in\cM} \left\|P_\dt^{\lfloor 1/\dt \rfloor}(q, \cdot) - Q_1(q, \cdot)\right\|_{\rm TV}.
\end{aligned}
\end{equation}
From~\cite[Lemma 2.7]{BH13}, since we work on a compact space, we know that there exists $\varepsilon > 0$ such that 
\[
\sup_{(q,q')\in\cM^2} \left\| Q_1(q, \cdot) - Q_1(q', \cdot)\right\|_{\rm TV} \leq 2(1-\varepsilon).
\]
To control the second term in~\eqref{eq:tri_ineq_transitions}, we introduce the transition kernel of the standard, un-Metropolized Euler-Maruyama scheme~\eqref{eq:EM}, denoted by $\widetilde{P}_\dt$, and write
\[
\sup_{q\in\cM} \left\|P_\dt^{\lfloor 1/\dt \rfloor}(q, \cdot) - Q_1(q, \cdot)\right\|_{\rm TV} \leq \sup_{q\in\cM} \left\|\widetilde{P}_\dt^{\lfloor 1/\dt \rfloor}(q, \cdot) - Q_1(q, \cdot)\right\|_{\rm TV} + \sup_{q\in\cM} \left\|P_\dt^{\lfloor 1/\dt \rfloor}(q, \cdot) - \widetilde{P}_\dt^{\lfloor 1/\dt \rfloor}(q, \cdot)\right\|_{\rm TV}
\]
By~\cite[Lemma~4.2]{BH13}, the transition kernel of the standard, un-Metropolized dynamics is uniformly close to the transition kernel of the continuous dynamics when the state space is compact: There exists $C_{\rm EM}>0$ and $\dt^*>0$ such that, for any $0 < \dt \leq \dt^*$,
\[
\sup_{q\in\cM} \left\| \widetilde{P}_\dt^{\lfloor 1/\dt \rfloor}(q, \cdot) - Q_1(q, \cdot)\right\|_{\rm TV} \leq C_{\rm EM} \sqrt{\dt}.
\]

It therefore remains to control the distance between the transition rate of the Metropolized and un-Metropolized dynamics. It is at this stage that the argument of~\cite[Lemma~4.6]{BH13} can be simplified. As in the proof of this lemma, we use a coupling argument, and consider two chains $q^i$ and $\widetilde{q}^i$, corresponding respectively to the Metropolized dynamics and the standard un-Metropolized one, starting from the same initial condition $q^0 \in \cM$. The probability that $q^n \neq \widetilde{q}^n$ is bounded from above by the probability that $q^i \neq \widetilde{q}^i$ for some $1 \leq i \leq n$, \textit{i.e.} at least one rejection occurred along the discrete trajectory. Since the probability to reject the move from~$q^i$ to~$q^{i+1}$ is $1-R_\dt(q^i,q^{i+1})$, it holds
\[
\left\|P_\dt^n(q, \cdot) - \widetilde{P}_\dt^n(q, \cdot)\right\|_{\rm TV} \leq 2 \, \mathbb{P}\left[ \left. q^n \neq \widetilde{q}^n \, \right| \, q^0 = q \right] \leq 2 \sum_{i=1}^n \mathbb{E}\left[ \left. \left( 1-R_\dt(q^i,q^{i+1}) \right) \, \right| \, q^0 = q \right].
\] 
In view of~\eqref{eq:estimate_R}, there exists $C_{\rm reject}$ such that $\mathbb{E}\left[ \left. \left( 1-R_\dt(q^i,q^{i+1}) \right) \, \right| \, q^0 = q \right] \leq C_{\rm reject} \dt^{3/2}$ for $\dt$ sufficiently small. The sum from $i=1$ to $n = \lfloor 1/\dt \rfloor$ may then be estimated as
\[
\left\|P_\dt^{\lfloor 1/\dt \rfloor}(q, \cdot) - \widetilde{P}_\dt^{\lfloor 1/\dt \rfloor}(q, \cdot)\right\|_{\rm TV} \leq 2 \, C_{\rm reject} \sqrt{\dt}.
\]
The combination of all previous estimates finally gives~\eqref{ineq_harris} provided~$\dt$ is sufficiently small.

For the second part of the lemma, we note that the bounds on the powers of $P_\dt$ imply that the sum $\dps \sum_{n = 0}^{+\infty} P_\dt^n$ is absolutely convergent in $\dps \mathcal{B}\left(\widetilde{L}^\infty(\cM)\right)$, and it is then easily checked that $\Id - P_\dt$, considered as an operator on~$\widetilde{L}^\infty(\cM)$, is invertible and that
\begin{equation}
  \label{eq:inv_I_Pdt}
  (\Id - P_\dt)^{-1} = \sum_{n=0}^{+\infty} P_\dt^n.
\end{equation}
In fact,
\[
\left\| (\Id - P_\dt)^{-1}f \right\|_{L^\infty} 
= \left\| \sum_{n=0}^{+\infty} P_\dt^n f \right\|_{L^\infty} 
\leq \sum_{n=0}^{+\infty} \left\| P_\dt^n f \right\|_{L^\infty} 
\leq C \sum_{n=0}^{+\infty} \rme^{-\lambda n \dt}\|f\|_{L^\infty} 
\leq \frac{C}{1-\rme^{-\lambda \dt}}\|f\|_{L^\infty},
\]
from which the bound~\eqref{eq:bound_discrete_generator} immediately follows.

\subsection{Proof of Theorem~\ref{thm:GK}}
\label{sec:GK}

We follow the strategy of~\cite[Section~4.8]{LMS13}. The proof starts by noticing that the integrated correlation function can be written using $\cL^{-1}$, as made precise in~\eqref{eq:integrated_autocovariance}. The strategy of the proof is to write an approximation of $\cL^{-1}$ using the discrete evolution operator~$P_\dt$. 

In view of~\eqref{eq:inv_I_Pdt} and~\eqref{eq:expansion_P_dt}, and since $\widetilde{C}^\infty(\cM)$ is stable under $\cL^{-1}$, it holds
\[
\begin{aligned}
\left(-\mathcal{L}\right)^{-1}\psi &= \left(\dt \sum_{n=0}^{+\infty} P_\dt^n \right)\left(\frac{\Id - P_\dt}{\dt}\right)\left(-\mathcal{L}^{-1}\right)\psi \\
&= \left(\dt \sum_{n=0}^{+\infty} P_\dt^n \right) \left( \left(\Id + \dt A\mathcal{L}^{-1}\right)\psi + \dt^{3/2} r_{\cL^{-1}\psi,\dt} \right). 
\end{aligned}
\]
Since $\cL^{-1}\psi$ still is a smooth function (by elliptic regularity), the remainder $r_{\cL^{-1}\psi,\dt}$ is uniformly bounded by Lemma~\ref{lem:expansion_Pdt}. Note also that since $(\Id-P_\dt)\cL^{-1}\psi$ and $A\cL^{-1}\psi$ have vanishing averages with respect to~$\mu$, the remainder $r_{\cL^{-1}\psi,\dt}$ has a vanishing average with respect to~$\mu$. The above equality shows that
\[
\int_\cM \left(-\mathcal{L}^{-1}\psi\right)\varphi \, d\mu = \dt \sum_{n=0}^{+\infty} \int_\cM \left[ P_\dt^n \left( \widetilde{\psi}_\dt + \dt^{3/2} r_{\cL^{-1}\psi,\dt} \right) \right] \varphi \, d\mu,
\]
where the sum is convergent in view of~\eqref{eq:geom_ergod}. In conclusion,
\[
\int_\cM \left(-\mathcal{L}^{-1}\psi\right)\varphi \, d\mu = \dt \sum_{n=0}^{+\infty} \mathbb{E}_\dt\left(\widetilde{\psi}_\dt(q^n)\,\varphi(q^0) \right) + \dt^{3/2} \int_\cM \left[\left(\frac{\Id - P_\dt}{\dt}\right)^{-1} r_{\cL^{-1}\psi,\dt}\right] \varphi \, d\mu,
\]
which gives the result, in view of the boundedness of the operator $\dps \left(\frac{\Id - P_\dt}{\dt}\right)^{-1}$ on $\widetilde{L}^\infty(\cM)$ (given by Lemma~\ref{lem:geom_ergod}).

\subsection{Proof of Theorem~\ref{thm:Einstein}}
\label{sec:Einstein}

We start by highlighting the martingale part of the increments $\delta_\dt(q^n,G^n,U^n) = Q^{n+1}-Q^n$, similarly to the continuous case (compare~\eqref{eq:rewriting_increment_continuous})~:
\begin{equation}
\label{eq:decomposition_delta}
\delta_\dt(q^n,G^n,U^n) = \Big( \delta_\dt(q^n,G^n,U^n) - \mathbb{E}_{G,U}\big( \delta_\dt(q^n,G,U) \big) \Big) + \mathbb{E}_{G,U}\big( \delta_\dt(q^n,G,U) \big),
\end{equation}
where the expectation on the right-hand side is over the Gaussian random variable~$G$ and the uniform variable~$U$ (the configuration~$q^n$ being fixed). It will be useful to decompose the increment as
\[
\delta_\dt(q,G,U) = \sqrt{2\dt}\, G - \dt \, \nabla V(q) - \delta^{\rm reject}_\dt(q,G,U),
\]
with
\[
\delta^{\rm reject}_\dt(q,G,U) = \ind_{U > R_\dt\left(q,q-\dt \, \nabla V(q) + \sqrt{2\dt} \, G\right)}\Big( \sqrt{2\dt}\, G - \dt \, \nabla V(q) \Big).
\]
In view of Lemma~\ref{lem:expansion_delta} (which shows in particular that $\delta^{\rm reject}_\dt(q,G,U)$ can be thought of as being of order~$\dt^2$ by setting $p=1$ in~\eqref{eq:lp_bound_delta_reject}), the first term on the right-hand side of~\eqref{eq:decomposition_delta} is equal to $\sqrt{2\dt} \, G^n$ at dominant order in~$\dt$. This term therefore corresponds to the term $\sqrt{2}W_t$ in the decomposition~\eqref{eq:rewriting_increment_continuous}. The second term in the right-hand side of~\eqref{eq:decomposition_delta} is handled by introducing an appropriate Poisson equation (see~\eqref{eq:def_Phi_dt} below), which is the discrete analogue of~\eqref{eq:Poisson_continu}.

\begin{lemma}
\label{lem:expansion_delta}
The average increment has the following expansion in powers of~$\dt$:
\begin{equation}
  \label{eq:expansion_ovd_delta}
  \mathbb{E}_{G,U}[\delta_\dt\left(q,G,U\right)] = -\dt \nabla V(q) - \sqrt{2} \, \dt^{2} \, \mathbb{E}_{G}\left[ \xi_+(q,G) G \right] + \dt^{5/2} r_{\delta,\dt},
\end{equation}
where $r_{\delta,\dt}$ is uniformly bounded for~$\dt$ sufficiently small.
In addition, there exists a constant $K > 0$ such that $\left|\delta^{\rm reject}_\dt(q,G,U)\right| \leq K\left(1+\sqrt{\dt}|G|\right)$ and, for any $p > 0$,
\begin{equation}
\label{eq:lp_bound_delta_reject}
\mathbb{E}_{G,U}\left|\delta^{\rm reject}_\dt(q,G,U)\right|^p \leq C \, \dt^{(p+3)/2}.
\end{equation}
\end{lemma}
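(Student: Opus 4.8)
The plan is to integrate out the uniform variable~$U$ first, and then insert the expansion~\eqref{eq:estimate_R} of the acceptance rate already obtained in the proof of Lemma~\ref{lem:expansion_Pdt}. All three assertions then reduce to Gaussian moment estimates, controlled using the fact that~$V$ and its derivatives are bounded on the compact torus~$\cM$. The pointwise bound on~$\delta^{\rm reject}_\dt$ is immediate: since the indicator is at most~$1$ and $|\nabla V| \leq C_V$ on~$\cM$, one has $|\delta^{\rm reject}_\dt(q,G,U)| \leq |\sqrt{2\dt}\,G - \dt\,\nabla V(q)| \leq \sqrt{2\dt}\,|G| + C_V \dt$, which is of the announced form $K(1+\sqrt{\dt}\,|G|)$ for $\dt \leq 1$.

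For the $L^p$ bound~\eqref{eq:lp_bound_delta_reject}, I would take the expectation over~$U$ using $\mathbb{E}_U[\ind_{U > R_\dt}] = 1 - R_\dt$, so that
\[
\mathbb{E}_{G,U}\left| \delta^{\rm reject}_\dt(q,G,U) \right|^p = \mathbb{E}_G\left[ \big(1 - R_\dt\big) \left| \sqrt{2\dt}\,G - \dt\,\nabla V(q) \right|^p \right].
\]
The bound~\eqref{eq:estimate_R}, together with the polynomial growth $|\xi_+(q,G)| \leq C(1+|G|^3)$ read off~\eqref{eq:def_xi} and the bound $|\widehat{\xi}_\dt(q,G)| \leq C(1+|G|^{12})$, yields $0 \leq 1 - R_\dt \leq C\,\dt^{3/2}(1+|G|^{12})$ uniformly for $\dt \leq 1$. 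Combined with $|\sqrt{2\dt}\,G - \dt\,\nabla V(q)|^p \leq C_p\,\dt^{p/2}(1+|G|^p)$, the integrand is dominated by $C\,\dt^{(p+3)/2}(1+|G|^{12})(1+|G|^p)$, whose Gaussian expectation is finite; this gives~\eqref{eq:lp_bound_delta_reject}.

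The expansion~\eqref{eq:expansion_ovd_delta} is obtained from the decomposition $\delta_\dt = \sqrt{2\dt}\,G - \dt\,\nabla V(q) - \delta^{\rm reject}_\dt$. Since~$G$ is centred, $\mathbb{E}_G[\sqrt{2\dt}\,G] = 0$, so that $\mathbb{E}_{G,U}[\delta_\dt] = -\dt\,\nabla V(q) - \mathbb{E}_G\big[(1 - R_\dt)(\sqrt{2\dt}\,G - \dt\,\nabla V(q))\big]$. Substituting $1 - R_\dt = \dt^{3/2}\xi_+(q,G) - \dt^2 \widehat{\xi}_\dt(q,G)$ and sorting by powers of~$\dt$, the only contribution of order~$\dt^2$ comes from the product $\dt^{3/2}\xi_+ \cdot \sqrt{2\dt}\,G$, producing $\sqrt{2}\,\dt^2\,\mathbb{E}_G[\xi_+(q,G)\,G]$; the remaining terms, namely $\dt^{5/2}\nabla V(q)\,\mathbb{E}_G[\xi_+(q,G)]$, $\sqrt{2}\,\dt^{5/2}\mathbb{E}_G[\widehat{\xi}_\dt(q,G)\,G]$ and $\dt^3\nabla V(q)\,\mathbb{E}_G[\widehat{\xi}_\dt(q,G)]$, are all of order at least~$\dt^{5/2}$. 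This is precisely~\eqref{eq:expansion_ovd_delta}.

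The only genuine difficulty is the uniformity of the remainder~$r_{\delta,\dt}$ with respect to both~$q$ and~$\dt$. This reduces to checking that $\mathbb{E}_G[\xi_+(q,G)]$, $\mathbb{E}_G[\widehat{\xi}_\dt(q,G)\,G]$ and $\mathbb{E}_G[\widehat{\xi}_\dt(q,G)]$ are bounded uniformly in $q \in \cM$ and $\dt \in (0,1]$, which follows from the $\dt$-uniform polynomial growth bounds above, the finiteness of Gaussian moments, and the boundedness of~$\nabla V$ on the compact torus. Everything else is routine bookkeeping of powers of~$\dt$.
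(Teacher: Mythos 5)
Your proposal is correct and follows essentially the same route as the paper: integrate out $U$ to reduce everything to $\mathbb{E}_G\big[(1-R_\dt)(\sqrt{2\dt}\,G-\dt\,\nabla V(q))^{(p)}\big]$, insert the expansion~\eqref{eq:estimate_R} of the acceptance probability, and control the remainders via Gaussian moments and the boundedness of $V$ and its derivatives on the compact torus. The paper's proof is just a terser version of the same bookkeeping, so there is nothing to add.
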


\begin{lemma}
\label{lem:expansion_Phi_dt}
There exists, for any $\dt > 0$, a unique function $\Phi_\dt = (\Phi_{\dt,1},\dots,\Phi_{\dt,dN}) \in \left(\widetilde{L}^\infty(\cM)\right)^{dN}$ such that
\begin{equation}
  \label{eq:def_Phi_dt}
  \left( P_\dt-\Id \right) \Phi_\dt(q) = \mathbb{E}_{G,U}\big[\delta_\dt\left(q,G,U\right)\big].
\end{equation}
Moreover, recalling the definition~\eqref{eq:Poisson_continu} of~$\Phi_0$,
\[
\Phi_\dt = \Phi_0 + \dt\,\widetilde{\Phi}^1 + \dt^{3/2} \, \Psi_\dt,
\]
where $\Psi_\dt$ is uniformly bounded for~$\dt$ sufficiently small, and $\widetilde{\Phi}^1$ is the unique solution of the Poisson equation
\begin{equation}
  \label{eq:Poisson_Phi_1}
  \cL \widetilde{\Phi}^1 = A\Phi_0 - \sqrt{2} \, \mathbb{E}_{G}\left[ \xi_+(q,G) G \right], \qquad \int_\cM \widetilde{\Phi}^1 \, d\mu = 0.
\end{equation}
\end{lemma}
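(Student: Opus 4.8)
The plan is to produce $\Phi_\dt$ by inverting $P_\dt-\Id$ on $\widetilde{L}^\infty(\cM)$, and then to read off the expansion by matching powers of $\dt$ and absorbing the leftover into a residual controlled by the uniform-in-$\dt$ resolvent bound \eqref{eq:bound_discrete_generator}. For the existence and uniqueness statement, I would set $b_\dt(q):=\mathbb{E}_{G,U}[\delta_\dt(q,G,U)]$. By Lemma~\ref{lem:expansion_delta} each component of $b_\dt$ is bounded, and the point is that $\int_\cM b_\dt\,d\mu=0$, so that $b_\dt\in(\widetilde{L}^\infty(\cM))^{dN}$. This centering is just the vanishing of the stationary mean displacement per step: writing $b_\dt(q)=\int R_\dt(q,q')T_\dt(q,q')(q'-q)\,dq'$ and combining the Metropolis detailed balance $\rme^{-V(q)}T_\dt(q,q')R_\dt(q,q')=\rme^{-V(q')}T_\dt(q',q)R_\dt(q',q)$ (with $\beta=1$) with the $L\mathbb{Z}^{dN}$-periodicity of $V$ — used to fold the $\R^{dN}$-integral back onto $\cM$ and relabel $q\leftrightarrow q'$ — shows the integral equals its own opposite. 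Lemma~\ref{lem:geom_ergod} then makes $\Id-P_\dt$ boundedly invertible on $\widetilde{L}^\infty(\cM)$, with $(\Id-P_\dt)^{-1}=\dt^{-1}((\Id-P_\dt)/\dt)^{-1}$, and $\Phi_\dt:=-(\Id-P_\dt)^{-1}b_\dt$ is the unique element of $(\widetilde{L}^\infty(\cM))^{dN}$ solving \eqref{eq:def_Phi_dt}.

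To identify the first correction, I would match orders. Since $\Phi_0$ is smooth by elliptic regularity (see \eqref{eq:Poisson_continu}), Lemma~\ref{lem:expansion_Pdt} gives $(P_\dt-\Id)\Phi_0=-\dt\nabla V+\dt^2 A\Phi_0+\dt^{5/2}r_{\Phi_0,\dt}$, whose leading term already matches the $-\dt\nabla V$ of $b_\dt$ in \eqref{eq:expansion_ovd_delta}, so $\Phi_\dt=\Phi_0+O(\dt)$. Inserting the ansatz $\Phi_\dt=\Phi_0+\dt\,\widetilde{\Phi}^1+\dt^{3/2}\Psi_\dt$ and collecting the order-$\dt^2$ contributions forces $\widetilde{\Phi}^1$ to solve the Poisson equation \eqref{eq:Poisson_Phi_1}. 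That equation is solvable on $\widetilde{L}^2(\mu)$ precisely because its right-hand side is centered: $\int_\cM A\Phi_0\,d\mu=0$ by \eqref{eq:vanishing_A}, while $\int_\cM\mathbb{E}_G[\xi_+(q,G)G]\,d\mu=0$ is obtained by extracting the order-$\dt^2$ coefficient of the identity $\int_\cM b_\dt\,d\mu=0$ proved above (its order-$\dt$ coefficient being $\int_\cM\nabla V\,d\mu=0$). Elliptic regularity makes $\widetilde{\Phi}^1$ smooth, hence an element of $(\widetilde{L}^\infty(\cM))^{dN}$.

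For the remainder, I would set $E_\dt:=\Phi_\dt-\Phi_0-\dt\,\widetilde{\Phi}^1$, which lies in $(\widetilde{L}^\infty(\cM))^{dN}$ since each summand is $\mu$-centered. Applying $P_\dt-\Id$ and expanding only the known smooth functions $\Phi_0$ and $\widetilde{\Phi}^1$ via Lemma~\ref{lem:expansion_Pdt} (never $\Psi_\dt$, which avoids circularity), the $\dt$- and $\dt^2$-terms cancel against $b_\dt$ by the choice of $\widetilde{\Phi}^1$, leaving $(P_\dt-\Id)E_\dt=\dt^{5/2}\rho_\dt$ with $\rho_\dt=r_{\delta,\dt}-r_{\Phi_0,\dt}-\dt^{1/2}A\widetilde{\Phi}^1-\dt\,r_{\widetilde{\Phi}^1,\dt}$ uniformly bounded in $L^\infty$. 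Inverting, $E_\dt=-\dt^{3/2}((\Id-P_\dt)/\dt)^{-1}\rho_\dt$, so that $\Psi_\dt=\dt^{-3/2}E_\dt=-((\Id-P_\dt)/\dt)^{-1}\rho_\dt$ is uniformly bounded by \eqref{eq:bound_discrete_generator}. This final step is the crux and the main obstacle: the discrete resolvent $(\Id-P_\dt)^{-1}$ has norm of order $\dt^{-1}$, so one must drive the residual all the way to order $\dt^{5/2}$ — which requires the full strength of the expansions in Lemmas~\ref{lem:expansion_Pdt} and~\ref{lem:expansion_delta} — in order to be left with the claimed $O(\dt^{3/2})$ error. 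The uniform-in-$\dt$ bound \eqref{eq:bound_discrete_generator} furnished by geometric ergodicity is exactly what converts this heuristic into a $\dt$-independent estimate; the remaining work is the careful bookkeeping ensuring $\rho_\dt$ is genuinely $O(1)$ in $L^\infty$ uniformly in $\dt$, together with the elementary but periodicity-sensitive verification that $b_\dt$ is centered.
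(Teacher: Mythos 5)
Your proposal is correct and follows essentially the same route as the paper: centering of the mean increment via the symmetry of the Metropolis kernel combined with periodicity (which the paper makes precise by decomposing the $\R^{dN}$-integral over lattice translates $I_n$ and pairing $I_n$ with $I_{-n}$), solvability of the Poisson equation for $\widetilde{\Phi}^1$ from \eqref{eq:vanishing_A} together with the centering of $\mathbb{E}_G[\xi_+(q,G)G]$ extracted from that of the mean increment, and control of the remainder by applying $(\Id-P_\dt)/\dt$ to $\Phi_\dt-\Phi_0-\dt\,\widetilde{\Phi}^1$ and invoking the uniform resolvent bound \eqref{eq:bound_discrete_generator}. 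The differences are purely presentational.
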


In view of these results,
\[
\begin{aligned}
\delta_\dt(q^n,G^n,U^n) & = \Big( \delta_\dt(q^n,G^n,U^n) - \mathbb{E}_{G,U}\big( \delta_\dt(q^n,G,U) \big) \Big) + P_\dt\Phi_\dt(q^n) - \Phi_\dt(q^n) \\
& = M^n_\dt + \Phi_\dt(q^{n+1})-\Phi_\dt(q^n),
\end{aligned}
\]
with, upon rewriting $P_\dt\Phi_\dt(q^n)$ as $\mathbb{E}_{G,U}\left[ \Phi_\dt\big(q^n + \delta_\dt(q^n,G,U)\big) \right]$,
\begin{equation}
\label{eq:discrete_martingale}
\begin{aligned}
M^n_\dt & = \Big( \delta_\dt(q^n,G^n,U^n) - \mathbb{E}_{G,U}\big( \delta_\dt(q^n,G,U) \big) \Big) - \Big( \Phi_\dt(q^{n+1}) - P_\dt\Phi_\dt(q^n) \Big)\\
& = \Big( \delta_\dt(q^n,G^n,U^n) - \mathbb{E}_{G,U}\big( \delta_\dt(q^n,G,U) \big) \Big) \\
& \ \ \ - \Big( \Phi_\dt\big(q^n + \delta_\dt(q^n,G^n,U^n)\big)- \mathbb{E}_{G,U}\left[ \Phi_\dt\big(q^n + \delta_\dt(q^n,G,U)\big) \right]\Big).
\end{aligned}
\end{equation}
The interest of this rewriting is to highlight the fact that $M^n_\dt$ can be fully understood in terms of the increments~$\delta_\dt(q^n,G^n,U^n)$ (in order to use Lemma~\ref{lem:cross_correlation} below). Note that $(M^n_\dt)_{n \geq 0}$ are stationary, independent martingale increments when $q^0~\sim \mu$ (since in this case $q^n \sim \mu$ for all $n \geq 0$). This shows that
\[
Q^n - Q^0 = \Phi_\dt(q^n)-\Phi_\dt(q^0) + \sum_{m=0}^{n-1} M^k_\dt.
\]
Since $\Phi_\dt$ is uniformly bounded as $\dt \to 0$, we obtain
\[
\mathscr{D}_\dt^{\rm Einstein} = \mathbb{E}\left(\frac{M^0_\dt}{\sqrt{\dt}} \otimes \frac{M^0_\dt}{\sqrt{\dt}}\right).
\]
We now expand $M^0_\dt$ in powers of~$\dt$. By Lemma~\ref{lem:expansion_Phi_dt}, it is possible to replace the function $\Phi_\dt$ in the second term on the right-hand side of~\eqref{eq:discrete_martingale} by $\Phi_0 + \dt \widetilde{\Phi}^1$, up to a remainder of order~$\dt^{3/2}$. We next use the following lemma to compute the cross correlation between the various functions of~$\dt$ appearing in $M^0_\dt \otimes M^0_\dt$.

\begin{lemma}
\label{lem:cross_correlation}
For any smooth functions $f,g$, growing at most polynomially,
\[
\frac{1}{\dt} \mathbb{E}_{G,U}\left[ \Big(f(\delta_\dt(q,G,U)) - \overline{f}(q) \Big)\Big(g(\delta_\dt(q,G,U)) - \overline{g}(q) \Big) \right] 
= 2 \nabla f(0)^T \nabla g(0) + \dt\,r_{f,g,\dt},
\]
with $\overline{f}(q) = \mathbb{E}_{G,U}\left[ f(\delta_\dt(q,G,U)) \right]$, and where the remainder $r_{f,g,\dt}$ is uniformly bounded for~$\dt$ sufficiently small.
\end{lemma}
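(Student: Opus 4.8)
The plan is to read the left-hand side as a rescaled covariance, $\dt^{-1}\cov_{G,U}\big(f(\delta_\dt),g(\delta_\dt)\big)$, and to expand $f$ and $g$ to fourth order around the origin, exploiting that the increment is concentrated near $0$. The structural input is the splitting
\[
\delta_\dt(q,G,U) = \sqrt{2\dt}\,G - \dt\,\nabla V(q) - \delta^{\rm reject}_\dt(q,G,U)
\]
recalled before the lemma, together with the moment bounds of Lemma~\ref{lem:expansion_delta}: these give $\E_{G,U}|\delta_\dt|^p = O(\dt^{p/2})$, while the rejection contribution is much smaller, $\E_{G,U}|\delta^{\rm reject}_\dt|^p = O(\dt^{(p+3)/2})$. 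In particular the dominant part of each increment is the Gaussian term $\sqrt{2\dt}\,G$, of size $\dt^{1/2}$ and with covariance exactly $2\dt\,\Id$.

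First I would write, with $\mu_1 = \E_{G,U}[\delta_\dt]$ and $H_f = \nabla^2 f(0)$ (and similarly $H_g = \nabla^2 g(0)$),
\[
f(\delta_\dt) - \overline{f}(q) = \nabla f(0)^T(\delta_\dt-\mu_1) + \frac12\Big(\delta_\dt^T H_f \delta_\dt - \E_{G,U}\big[\delta_\dt^T H_f \delta_\dt\big]\Big) + C_f + \big(r_f - \E_{G,U}[r_f]\big),
\]
where $C_f = \frac16\big(D^3f(0)(\delta_\dt^{\otimes 3}) - \E_{G,U}[D^3f(0)(\delta_\dt^{\otimes 3})]\big)$ is the centred cubic term and $r_f$ the fourth-order Taylor remainder, and similarly for $g$. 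Since $f$, $g$ and all their derivatives grow at most polynomially, I would bound $|D^4 f(s\delta_\dt)| \leq C(1+|\delta_\dt|^k)$ and use the moment estimates above to control the $L^2(G,U)$ norms of the successive terms: the linear, quadratic, cubic and remainder pieces are of respective orders $\dt^{1/2}$, $\dt$, $\dt^{3/2}$ and $\dt^2$.

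I would then multiply the expansions for $f$ and $g$, take the expectation, and collect powers of $\dt^{1/2}$. The leading (linear $\times$ linear) contribution is $\nabla f(0)^T \cov(\delta_\dt)\,\nabla g(0)$; using the splitting above together with the rejection estimate one finds $\cov(\delta_\dt) = 2\dt\,\Id + O(\dt^{5/2})$, the correction being small because rejection occurs with probability $O(\dt^{3/2})$ and only multiplies increments of size $\dt^{1/2}$. After dividing by $\dt$ this produces the announced leading term $2\,\nabla f(0)^T\nabla g(0)$. The decisive point --- the one that upgrades the error from $O(\dt^{1/2})$ to $O(\dt)$ --- is that the mixed linear $\times$ quadratic term, a priori of order $\dt^{3/2}$, vanishes at that order: its dominant part is
\[
\sqrt{2}\,\dt^{3/2}\,\E_G\big[\nabla f(0)^T G\,\big(G^T H_g G - \Tr H_g\big)\big],
\]
which involves only odd moments of the centred Gaussian $G$ and is therefore zero by parity (and symmetrically under the $f \leftrightarrow g$ exchange). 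All remaining cross terms --- linear $\times$ cubic, quadratic $\times$ quadratic, and every product involving a remainder, estimated by Cauchy--Schwarz from the $L^2$ bounds above --- are $O(\dt^2)$. Summing, $\E_{G,U}\big[(f(\delta_\dt)-\overline f)(g(\delta_\dt)-\overline g)\big] = 2\dt\,\nabla f(0)^T\nabla g(0) + O(\dt^2)$, which is the claim after division by $\dt$.

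I expect the main obstacle to be twofold. First, keeping the Taylor remainders under uniform control despite the unbounded Gaussian increments and the polynomial growth of $f$ and $g$: this is exactly where the full strength of the moment bounds of Lemma~\ref{lem:expansion_delta}, including the sharp order $\dt^{(p+3)/2}$ for the rejection part, is needed. Second, and more conceptually, establishing the parity cancellation of the $\dt^{3/2}$ term cleanly, since it is precisely this cancellation that pins down the claimed order of accuracy; any surviving odd-moment contribution would degrade the estimate to $O(\dt^{1/2})$.
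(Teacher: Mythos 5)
Your proposal is correct and follows essentially the same route as the paper's proof: Taylor expansion of $f$ and $g$ around the origin, the moment bounds $\mathbb{E}|\delta_\dt|^p = O(\dt^{p/2})$ and $\mathbb{E}|\delta^{\rm reject}_\dt|^p = O(\dt^{(p+3)/2})$ from Lemma~\ref{lem:expansion_delta}, the odd-moment (parity) cancellation of the linear--quadratic Gaussian cross term, and the refined treatment of the rejection contribution via the $O(\dt^{3/2})$ rejection probability rather than a bare Cauchy--Schwarz bound. The only differences are cosmetic (you centre each Taylor term instead of expanding $\overline{f}$ explicitly, and you carry the expansion one order further than strictly needed), and you correctly identify the two points --- the parity cancellation and the $\dt^{5/2}$ smallness of the rejection correction to the covariance --- on which the claimed $O(\dt)$ accuracy actually hinges.
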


The conclusion then follows by applying this result with the functions $f,g$ replaced by $x \mapsto F^Tx$, $x \mapsto F^T \Phi_0(q+x)$ and $x \mapsto F^T \widetilde{\Phi}^1(q+x)$ for a given test direction~$F$. Indeed,
\[
F^T \mathscr{D}_\dt^{\rm Einstein} F = 2 \int_\cM |F|^2 - 2 \nabla \left(F^T \Phi_0\right) \cdot F + \left|\nabla \left(F^T \Phi_0\right)\right|^2 \, d\mu + \dt \, r_{F,\dt},
\]
with $|r_{F,\dt}|/|F|^2$ uniformly bounded as $\dt \to 0$. Manipulations similar to the ones leading to~\eqref{eq:covariance} finally give the claimed result.

\begin{remark}
\label{rem:more_work}
In order to characterize the leading order term in the error and prove that the subleading order term indeed is of order~$\dt^{3/2}$, it would be necessary to compute correlation terms involving components of the remainder term~$\Psi_\dt$. This is not possible as such because the regularity of~$\Psi_\dt$ is not established, and obtaining regularity result from~\eqref{eq:def_Phi_dt} is difficult. An expansion of $\Phi_\dt$ up to $\dt^2$ terms (as $\Phi_0 + \dt \widetilde{\Phi}^1 + \dt^{3/2} \widetilde{\Phi}^{3/2} + \dt^2 \widetilde{\Psi}_\dt$) is therefore needed in order not have to treat correlations involving the remainder~$\Psi_\dt$. This, in turn, would require an expansion of~$P_\dt$ up to remainders of order~$\dt^3$, instead of~$\dt^{5/2}$ as in~\eqref{eq:expansion_P_dt}. Although this does not pose any problem in principle, we chose not to follow this path in order to keep the arguments as simple as possible.
\end{remark}

We conclude this section with the proofs of the technical results quoted above.

\begin{proof}[Proof of Lemma~\ref{lem:expansion_delta}]
We first write an expansion of $\mathbb{E}_{G,U}(\delta_\dt(q,G,U))$ in fractional powers of~$\dt$:
\[
\begin{aligned}
\mathbb{E}_{G,U}[\delta_\dt\left(q,G,U\right)] + \dt \, \nabla V(q) & = \mathbb{E}_{G,U}\left[\delta_\dt\left(q,G,U\right) + \dt \, \nabla V(q) -\sqrt{2\dt} \, G\right]\\
& = \mathbb{E}_{G,U} \left[ \left( \ind_{U \leq R_\dt\left(q,q-\dt \, \nabla V(q) + \sqrt{2\dt} \, G\right)} -1\right)\left(-\dt \, \nabla V(q) + \sqrt{2\dt} \, G\right) \right] \\
& = \mathbb{E}_{G} \left[ \left( R_\dt\left(q,q-\dt \, \nabla V(q) + \sqrt{2\dt} \, G\right) -1\right)\left(-\dt \, \nabla V(q) + \sqrt{2\dt} \, G\right) \right].
\end{aligned}
\]
In view of~\eqref{eq:estimate_R}, it holds $\mathbb{E}_{G,U}[\delta_\dt\left(q,G,U\right)] + \dt \, \nabla V(q) = -\sqrt{2} \dt^2 \, \mathbb{E}_{G}\left[ \xi_+(q,G) G \right] + \dt^{5/2} r_{\delta,\dt}$, which gives~\eqref{eq:expansion_ovd_delta}. 

The bound~\eqref{eq:lp_bound_delta_reject} is a straightforward consequence of the equality
\[
\mathbb{E}_{G,U}\left|\delta^{\rm reject}_\dt(q,G,U)\right|^p = \mathbb{E}_{G}\left[\left( 1-R_\dt\left(q,q-\dt \, \nabla V(q) + \sqrt{2\dt} \, G\right)\right)\left| \sqrt{2\dt}\, G - \dt \, \nabla V(q) \right|^p \right],
\]
while $\left|\delta^{\rm reject}_\dt(q,G,U)\right| \leq \left|\sqrt{2\dt} \, G - \dt\, \nabla V(q)\right|$ immediately gives $\left|\delta^{\rm reject}_\dt(q,G,U)\right| \leq K\left(1+\sqrt{\dt}|G|\right)$.
\end{proof}

\begin{proof}[Proof of Lemma~\ref{lem:expansion_Phi_dt}]
We introduce the normalized average increment, defined as the following periodic function~:
\[
\overline{\delta}_\dt(q) = \frac{\mathbb{E}_{G,U}[\delta_\dt\left(q,G,U\right)]}{\dt}.
\]
Lemma~\ref{lem:geom_ergod} shows that $\Phi_\dt$ is well defined provided the periodic function $\overline{\delta}_\dt$ has a vanishing average with respect to~$\mu$. To prove this statement, we start from
\[
\mathbb{E}_{G,U}[\delta_\dt\left(q,G,U\right)] = \int_{\R^{dN}} R_\dt(q,q')T_\dt(q,q')(q'-q) \, dq'.
\]
It is easily seen that
\[
\begin{aligned}
& \int_\cM \mathbb{E}_{G,U}[\delta_\dt\left(q,G,U\right)] \, \mu(dq) = \int_\cM \int_{\R^{dN}} R_\dt(q,q')T_\dt(q,q')(q'-q) \, dq' \, \mu(dq) \\
& \qquad = \sum_{n \in \mathbb{Z}^{dN}} \int_\cM \int_\cM R_\dt(q,q'+nL)T_\dt(q,q'+nL)(q'-q+nL) \, dq' \, \mu(dq) = \sum_{n \in \mathbb{Z}^{dN}} I_n,
\end{aligned}
\]
with
\[
I_n = \int_\cM \int_\cM \min\Big( \mu(dq')T_\dt(q'+nL,q), \mu(dq)T_\dt(q,q'+nL)\Big) (q'-q+nL) \, dq' \, dq.
\]
Since $q$ and $q'$ play symmetric roles, $I_0$ vanishes. For $n \geq 0$, we obtain, by first exchanging the names of the dummy variables $q$ and $q'$ and then using $T_\dt(q+nL,q') = T_\dt(q,q'-nL)$ as well as the invariance of $\mu$ by translations of the periodic cell,
\[
\begin{aligned}
& I_n = \int_\cM \int_\cM \min\Big( \mu(dq')T_\dt(q'+nL,q), \mu(dq)T_\dt(q,q'+nL)\Big) (q'-q+nL) \, dq' \, dq \\
& \qquad \qquad  = \int_\cM \int_\cM \min\Big( \mu(dq)T_\dt(q+nL,q'), \mu(dq')T_\dt(q',q+nL)\Big) (q-q'+nL) \, dq \, dq' \\
& \qquad \qquad  = \int_\cM \int_\cM \min\Big( \mu(dq)T_\dt(q,q'-nL), \mu(dq')T_\dt(q'-nL,q)\Big) \Big(q-(q'-nL)\Big) \, dq \, dq' \\
& \qquad \qquad  = -\int_\cM \int_\cM \min\Big( \mu(dq')T_\dt(q'-nL,q),\mu(dq)T_\dt(q,q'-nL)\Big) (q'-q-nL) \, dq \, dq'.
\end{aligned}
\]
Therefore, $I_n = -I_{-n}$, which allows to conclude that
\begin{equation}
\label{eq:vanishing_avg_ovd_delta}
\forall \dt > 0, \qquad \int_\cM \overline{\delta}_\dt(q) \, \mu(dq) = 0.
\end{equation}

To obtain an expansion of $\Phi_\dt$ in terms of fractional powers of~$\dt$, we rely on~\eqref{eq:expansion_ovd_delta}, which implies that
\begin{equation}
\label{eq:expansion_overline_delta}
\overline{\delta}_\dt(q) = -\nabla V(q) - \sqrt{2} \, \dt \, \mathbb{E}_{G}\left[ \xi_+(q,G) G \right] + \dt^{3/2} r_{\delta,\dt}.
\end{equation}
The function $\widetilde{\Phi}^1$ introduced in~\eqref{eq:Poisson_Phi_1} is indeed well defined since $Af$ has a vanishing average with respect to~$\mu$ for any smooth function~$f$ (see~\eqref{eq:vanishing_A}), while the condition~\eqref{eq:vanishing_avg_ovd_delta}, together with the expansion~\eqref{eq:expansion_overline_delta}, shows that the average of $\mathbb{E}_{G}\left[ \xi_+(q,G) G \right]$ with respect to~$\mu$ also vanishes. Now, consider the following difference, relying on the expansion~\eqref{eq:expansion_P_dt} and the definition of~$\Phi_0$~:
\[
\begin{aligned}
-\left(\frac{\mathrm{Id}-P_\dt}{\dt}\right) \left( \Phi_\dt - \Phi_0 - \dt \, \widetilde{\Phi}^1 \right) & = \overline{\delta}_\dt(q) - \Big(\cL + \dt \, A\Big) \left( \Phi_0 + \dt \, \widetilde{\Phi}^1 \right) + \dt^{3/2} \, r_\dt \\
& = -\dt \left( A\Phi_0 + \sqrt{2} \, \mathbb{E}_{G}\left[ \xi_+(q,G) G \right] + \cL \widetilde{\Phi}^1 \right) + \dt^{3/2} \, \widetilde{r}_\dt.
\end{aligned}
\]
The first term on the right-hand side of the last equality vanishes by definition of $\widetilde{\Phi}^1$. The remainder $\widetilde{r}_\dt$ has a vanishing average with respect to~$\mu$ since it belongs to $\mathrm{Ran}(\Id-P_\dt)$. Lemma~\ref{lem:geom_ergod} then shows that there exists a constant $C > 0$ and $\dt^* > 0$ such that, for any $0 < \dt \leq \dt^*$, 
\[
\left\| \Phi_\dt - \Phi_0 - \dt \, \widetilde{\Phi}^1 \right\|_{L^\infty(\cM)} \leq C \, \dt^{3/2}.
\]
This gives the result upon defining $\Psi_\dt = \dt^{-3/2}\left(\Phi_\dt - \Phi_0 - \dt \, \widetilde{\Phi}^1\right)$.
\end{proof}

\begin{proof}[Proof of Lemma~\ref{lem:cross_correlation}]
A Taylor expansion with integral remainder gives
\[
\begin{aligned}
f(\delta_\dt(q,G,U)) & = f(0) + \nabla f(0)^T \delta_\dt(q,G,U) + \frac12 \delta_\dt(q,G,U)^T \nabla^2 f(0) \delta_\dt(q,G,U) \\
& \ \ \ + \frac12 \int_0^1 (1-\theta)^2 D^3f(\theta \delta_\dt(q,G,U)) \Big(\delta_\dt(q,G,U)^{\otimes 3}\Big)d\theta.
\end{aligned}
\]
A simple computation using~\eqref{eq:expansion_ovd_delta} and~\eqref{eq:lp_bound_delta_reject} shows that
\[
\overline{f}(q) = f(0) + \dt \left( -\nabla V(q)^T \nabla f(0) + \Delta f(0) \right) + \dt^2 r_{f,\dt},
\]
where $r_{f,\dt}$ is uniformly bounded for $\dt$ sufficiently small. Therefore,
\[
\begin{aligned}
& f(\delta_\dt(q,G,U)) - \overline{f}(q) \\
& = \nabla f(0)^T \Big( \delta_\dt(q,G,U) + \dt\, \nabla V(q)\Big) + \frac12 \nabla^2 f(0) : \Big( \delta_\dt(q,G,U) \otimes \delta_\dt(q,G,U) - 2\dt\,\Id\Big)\\
& \quad + \frac12 \int_0^1 (1-\theta)^2 D^3f(\theta \delta_\dt(q,G,U)) \Big(\delta_\dt(q,G,U)^{\otimes 3}\Big)d\theta - \dt^2 r_{f,\dt} \\
& = \sqrt{2\dt} \, \nabla f(0)^T G + \dt \, \nabla^2 f(0) : \Big( G \otimes G - \Id\Big) - \frac{\dt^{3/2}}{\sqrt{2}} \nabla^2 f(0) : \Big(\nabla V(q) \otimes G + G \otimes \nabla V(q) \Big)  \\
& \quad + \nabla f(0)^T \delta^{\rm reject}_\dt(q,G,U) \\
& \quad  + \frac12 \nabla^2 f(0) : \Big( \delta^{\rm reject}_\dt(q,G,U) \otimes \delta_\dt(q,G,U) + \delta_\dt(q,G,U) \otimes \delta^{\rm reject}_\dt(q,G,U) + \delta^{\rm reject}_\dt(q,G,U) \otimes \delta^{\rm reject}_\dt(q,G,U)\Big)\\
& \quad + \frac12 \int_0^1 (1-\theta)^2 D^3f(\theta \delta_\dt(q,G,U)) \Big(\delta_\dt(q,G,U)^{\otimes 3}\Big)d\theta - \dt^2 r_{f,\dt}.
\end{aligned}
\]
A simple computation finally shows that
\[
\frac{1}{\dt} \mathbb{E}_{G,U}\left[ \Big(f(\delta_\dt(q,G,U)) - \overline{f}(q) \Big)\Big(g(\delta_\dt(q,G,U)) - \overline{g}(q) \Big) \right] 
= 2 \, \mathbb{E}_{G}\left[\left( \nabla f(0)^T G \right)\left(\nabla g(0)^T G\right)\right] + \dt\, r_{f,g,\dt},
\]
where the remainder $r_{f,g,\dt}$ is uniformly bounded.
\end{proof}


\end{document}